\newcommand{\R}{\mathbb{R}}
\newcommand{\C}{\mathbb{C}}
\newcommand{\Z}{\mathbb{Z}}
\newcommand{\N}{\mathbb{N}}
\newtheorem{lem}{Lemma}
\newtheorem{thm}[lem]{Theorem}
\newtheorem{cor}[lem]{Corollary}
\newtheorem{conj}{Conjecture}
\theoremstyle{definition}
\newtheorem{definition}[lem]{Definition}
\renewcommand{\descriptionlabel}[1]%
     {\hspace{\labelsep}\textsf{#1}}
\begin{document}

\title{Similarity at Misiurewicz Maps in
the Cubic Parameter Curves}
\author{Araceli Bonifant and Brady Young}
\date{}

\maketitle
\begin{abstract}
We present a proof of the conjecture by Bonifant and Milnor (see \cite{CM3}) regarding the similarity between the connectedness locus of the curve $\mathcal{S}_p$ at Misiurewicz parameters and their corresponding filled Julia sets in a neighborhood of the corresponding free co-critical point. The proof is in parallel with the generalization of Tan Lei's proof of similarity in the Mandelbrot set developed by Kawahira.
\end{abstract}

\vspace{.25in}

\setcounter{equation}{0}
\setcounter{lem}{0}
\section{Introduction}

A notable result in the study of quadratic polynomials is the similarity at Misiurewicz points in the Mandelbrot set $\mathbb{M}$.  In essence, there is a 
dense set of points $c$ on the boundary of $\mathbb{M}$ where the structure of the boundary, zooming to a very high scale into a very small neighborhood of this point, is very similar to the structure in a neighborhood of $c$ in its associated Julia set.  Initially proven by Tan Lei in \cite{TL90:1990}, new techniques for proving this result were introduced by Kawahira in \cite{Kawa20:2020}.

\begin{figure}[h]
    \centering
    \includegraphics[scale=.225]{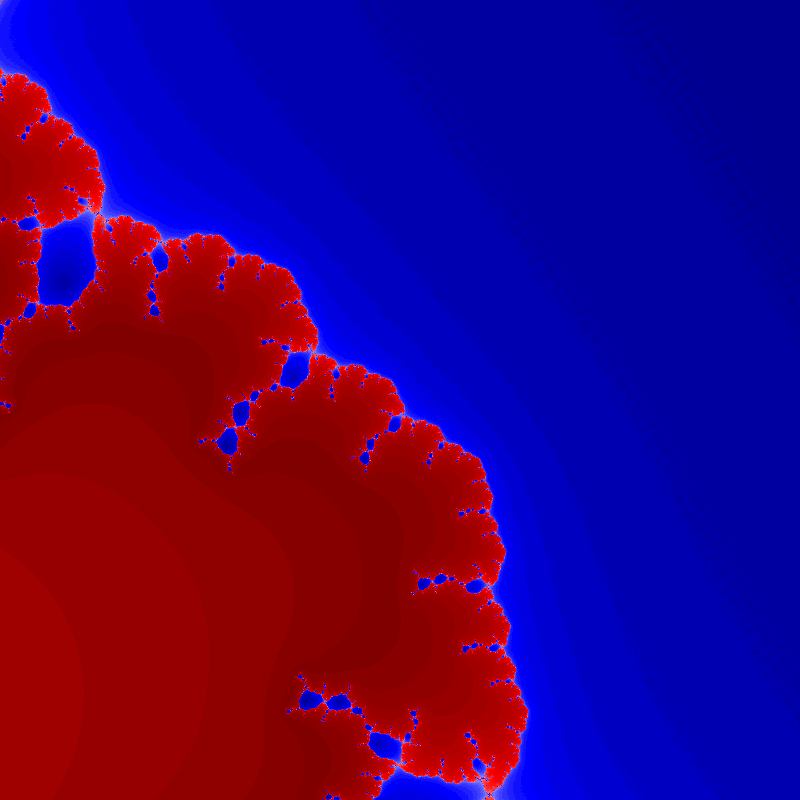}
    \includegraphics[scale=.225]{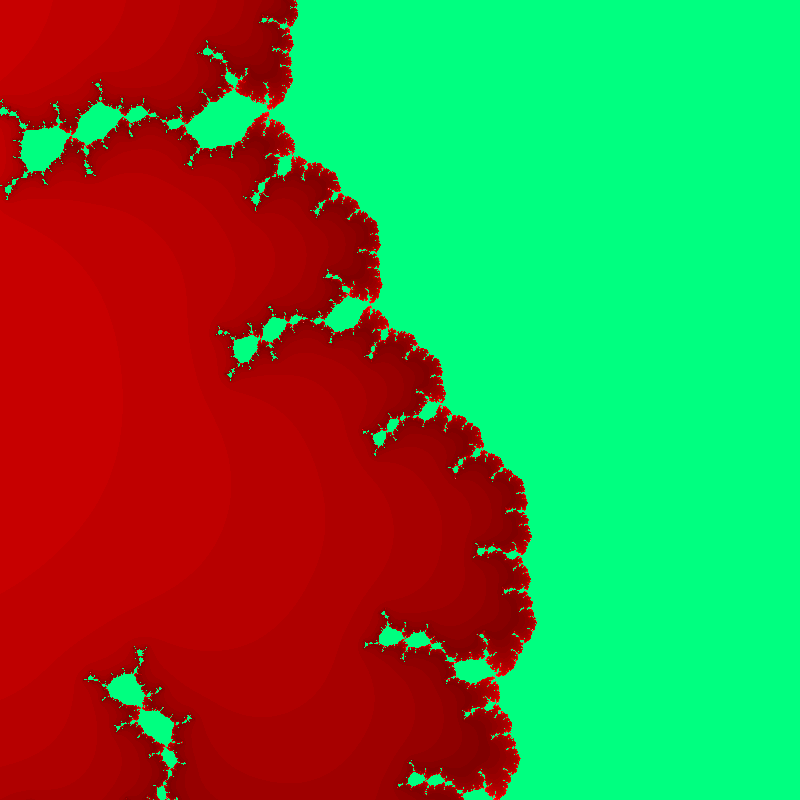}
    \caption{\textbf{Left}: \textsf{The parameter curve $\mathcal{S}_2$ near a Misiurewicz map. \textbf{Right}: The filled Julia set of the Misiurewicz map near the co-critical point $2a$}}
    \label{fig:enter}
\end{figure}

Milnor\footnote{Milnor's paper was circulated circa 1992 and published in 2009.}, and Branner and Hubbard developed a theory of the parameter space of cubic polynomials, see \cite{CM1:2009} and \cite{BH}. Milnor introduced the superattracting period $p$ cubic parameter curves known as $\mathcal{S}_p$, a way of taking one dimensional slices of the two dimensional cubic parameter space. Following Milnor, every cubic polynomial can written in the normal form
\begin{equation}\label{e-0}
F_{a,v}(z) = z^3 - 3a^2z + (2a^3+v)~.
\end{equation}
For a map in $\mathcal{S}_p$, the marked critical point $a_F$ is periodic of period $p$. 

In these curves, the \textbf{Misiurewicz maps} are maps where the free critical point $-a_F$ is repelling strictly preperiodic. It was observed in \cite{CM3} that the Riemann surface $\mathcal{S}_p$ exhibits the so-called \textbf{Tan-Lei similarity} at Misiurewicz maps,\begin{footnote}{
It was noted in \cite{SCP} that following Tan Lei's  method of proof for the quadratic case, one could obtain a similarity result between the connectedness locus of cubic polynomials at a ``more general notion'' of Misiurewicz map and their associated Julia set, but this would require projections of the parameter space that may alter the topology of the curve in the case of $\mathcal{S}_p$.}
\end{footnote}and the following was conjectured:
\begin{conj}
Under iterated magnification, the connectedness locus around a Misiurewicz map $F \in \mathcal{S}_p$ looks more and more like the filled Julia set $K_F$ of $F$ around the free co-critical point $2a_F$ (up to a fixed scale change and rotation).
\end{conj}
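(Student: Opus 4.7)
Let $F_0 = F_{a_0, v_0} \in \mathcal{S}_p$ be the Misiurewicz map, with $F_0^l(-a_0) = \beta_0$ on a repelling cycle of period $k$ and multiplier $\rho = (F_0^k)'(\beta_0)$, $|\rho| > 1$. Fix a Koenigs linearizer $\psi$ on a neighborhood $U$ of $\beta_0$, normalized so that $\psi(\beta_0) = 0$ and $\psi \circ F_0^k = \rho \cdot \psi$. A key observation is that, although $F_0^l$ has a critical point at $-a_0$, the co-critical point $2a_0$ satisfies $F_0^l(2a_0) = \beta_0$ \emph{univalently}: its forward iterates coincide with those of $-a_0$ from step one on, and these avoid $\{\pm a_0\}$ by the Misiurewicz hypothesis (otherwise $-a_0$ would be periodic, or preperiodic to the superattracting $p$-cycle of $a_0$, contradicting that the landing cycle is repelling). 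Therefore $\tilde\psi := \psi \circ F_0^l$ conformally carries a neighborhood of $2a_0$ in $K_{F_0}$ onto a neighborhood of $0$ in $\psi(U \cap K_{F_0})$; this is the precise sense in which the co-critical point, not the critical point itself, is the right object to compare with the parameter space.

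\textbf{The main rescaling identity.} Let $t$ be a local holomorphic coordinate on $\mathcal{S}_p$ centered at $F_0$. Since $\rho \neq 1$, the implicit function theorem produces a holomorphic family $\beta(t)$ of periodic points with holomorphic multiplier $\rho(t)$ and linearizers $\psi_t$ satisfying $\psi_t(\beta(t)) = 0$. For $t$ small and $n$ large, $F_t^{l+nk}(-a(t))$ lies in the domain of $\psi_t$, and iterating the functional equation $n$ times gives
\[
\psi_t\bigl(F_t^{l+nk}(-a(t))\bigr) = \rho(t)^n \, g(t), \qquad g(t) := \psi_t\bigl(F_t^l(-a(t))\bigr).
\]
Since $g(0) = 0$, expanding $g(t) = g'(0)\,t + O(t^2)$ and substituting $t = \rho^{-n} w$ yields $\rho(t)^n g(t) \to g'(0) \cdot w$ as $n \to \infty$, uniformly on compact subsets of the $w$-plane.

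\textbf{Transversality and the main obstacle.} The crux is the transversality statement $g'(0) \neq 0$, which says that deformations along $\mathcal{S}_p$ genuinely push the critical orbit off the repelling cycle. Following Kawahira, I would argue: if $g \equiv 0$ on a neighborhood of $0$, then every nearby map in $\mathcal{S}_p$ would be post-critically finite of the same combinatorial type, contradicting Thurston-type rigidity for Misiurewicz maps; this suffices to rule out the identically zero case. Ruling out higher-order vanishing at the single point $t = 0$ (as opposed to identical vanishing) is where I expect the main technical difficulty to lie, since $\mathcal{S}_p$ is a one-dimensional curve and the available infinitesimal deformations are constrained compared with Tan Lei's quadratic situation. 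The natural approach is a quasi-conformal deformation argument internal to $\mathcal{S}_p$, together with a careful analysis of the holomorphic dependence of $\psi_t$ via the $\bar\partial$-equation, patterned after Kawahira's use of the parabolic-implosion/holomorphic-motion framework.

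\textbf{Conclusion.} Granted $g'(0) \neq 0$, the rescaled connectedness locus at scale $\rho^{-n}$ around $t = 0$, namely $\{ w \in \C : \rho^{-n} w \in \text{Conn}(\mathcal{S}_p)\}$, converges in the Hausdorff topology to $g'(0)^{-1} \cdot \psi_0(U \cap K_{F_0})$. Using the conformal identification $\tilde\psi = \psi_0 \circ F_0^l$ near $2a_0$, this limit is precisely an affine copy of $K_{F_0}$ near the free co-critical point, with explicit scale-rotation factor determined by $g'(0)$, $\psi_0'(\beta_0)$, and $(F_0^l)'(2a_0)$; this is the asserted Tan-Lei similarity.
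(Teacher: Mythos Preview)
Your overall framework is essentially the paper's: both follow Kawahira's Poincar\'e-function scheme, linearizing at the repelling periodic point and reducing the similarity statement to a transversality condition on $g(t) = \psi_t(F_t^\ell(-a(t)))$ (equivalently, the paper's $b(c)-s(c)$ having a simple zero at $c=0$). The rescaling identity and the observation that the co-critical point $2a_F$, not $-a_F$, is the right anchor are both correct and used in the paper.

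The genuine gap is the transversality step, which you yourself flag as incomplete. Your Thurston-rigidity sketch only excludes $g\equiv 0$; simplicity of the zero at $t=0$ is left to an unspecified ``quasi-conformal deformation argument internal to $\mathcal{S}_p$'' and a reference to ``parabolic-implosion/holomorphic-motion'' machinery that is not actually part of Kawahira's argument and is a red herring here. The paper resolves this concretely by an external-ray argument adapted from the Orsay Notes: if $\theta$ is an external argument of $2a_F$ in $K_F$, one shows that the parameter ray of angle $\theta/\mu_h$ in the bordering escape region $\mathcal{E}_h$ lands at $F$. The key input is that each escape region carries a conformal isomorphism $\mathcal{A}:\mathcal{E}_h\to\C\smallsetminus\overline{\mathbb{D}}$ via a branch of the $\mu_h$-th root of the B\"ottcher coordinate of $2a_F$; hence for each $s>0$ the equation $H_s(\lambda)=\bm{r}_{F_\lambda,\theta}(s)-2a(\lambda)=0$ has a \emph{unique simple} solution, and Hurwitz as $s\to 0$ forces the zero of $H_0$ at $\lambda=0$ to be simple. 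This is exactly the one place where the one-dimensional structure of $\mathcal{S}_p$ and its escape-region geometry enter, and it is what your proposal is missing.

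A secondary omission: your Conclusion asserts Hausdorff convergence of the rescaled connectedness locus without the two-sided argument. The inclusion $[C_k]_r\subset N_\epsilon([\mathscr{K}]_r)$ follows from uniform escape off $\mathscr{K}$, but the reverse inclusion requires producing, near each $w_0\in\mathscr{K}$, a parameter $w_k\in C_k$; the paper does this by approximating $\phi(w_0)$ with a repelling periodic point and applying Hurwitz to $G^d(\Phi_k(w))-\Phi_k(w)$ to locate a preperiodic (hence connected-locus) parameter nearby.
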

The important tool used to study the parameter curves in this respect are the \textbf{local coordinates} developed in \cite{CM2:2010}, which give a local parametrization of $\mathcal{S}_p$ in open neighborhoods. Using the idea in \cite{Kawa20:2020}, as well as the restriction to locality, the conjecture is proven and is presented as:
\begin{thm}[\bf Main Theorem]\label{t-main}
Let $F \in \mathcal{S}_p$ be a Misiurewicz map. Assume that the curve $\mathcal{S}_p$ has been parametrized locally near $F$ by a parameter $\textbf{t}$. Then there exists a non-constant entire function $\phi$ on $\C$, a sequence $\rho_k \to 0$, and a constant $q \neq 0$ such that if we set $\mathscr{K} = \phi^{-1}(K_F) \subset \C$, then for any large $r > 0$, we have that
\begin{enumerate}
\item
$[\rho_k^{-1}(K_F - 2a_F)]_r \to [\mathscr{K}]_r$
\item
$[\rho_k^{-1}q(\mathscr{C}(\mathcal{S}_p) - F)]_r \to [\mathscr{K}]_r$
\end{enumerate}
as $k \to \infty$ in the Hausdorff topology. 
\end{thm}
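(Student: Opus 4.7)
My plan follows Kawahira's strategy adapted to the $\mathcal{S}_p$ setting. Since $F$ is Misiurewicz, the free critical point satisfies $F^m(-a_F) = \alpha$ for some minimal $m \geq 1$, where $\alpha$ is a repelling periodic point of some period $\ell$ with multiplier $\lambda = (F^\ell)'(\alpha)$, $|\lambda| > 1$. Because $F(2a_F) = F(-a_F)$, also $F^m(2a_F) = \alpha$; and since the orbit of $-a_F$ cannot meet the super-attracting orbit of $a_F$, we have $D := (F^m)'(2a_F) \neq 0$, so $F^m$ admits a local holomorphic inverse $G$ near $\alpha$ with $G(\alpha) = 2a_F$. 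Let $\phi : \C \to \C$ be the Poincar\'e linearizer at $\alpha$, normalized by $\phi(0) = \alpha$, $\phi'(0) = 1$, so that $\phi(\lambda z) = F^\ell(\phi(z))$, and set $\mathscr{K} = \phi^{-1}(K_F)$.

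For the dynamical statement, I would consider $w_k(z) := G(\phi(z/\lambda^k))$ for $z \in D_r$ and $k$ large. Since $K_F$ is totally invariant under $F^m$, the identity $F^{\ell k}(\phi(z/\lambda^k)) = \phi(z)$ gives $w_k(z) \in K_F \iff z \in \mathscr{K}$. Taylor expansion yields $w_k(z) - 2a_F = (D\lambda^k)^{-1} z + O(\lambda^{-2k})$ uniformly on $D_r$, so with $\rho_k := (D\lambda^k)^{-1}$ we get $\rho_k^{-1}(w_k(z) - 2a_F) \to z$ uniformly on $D_r$. Koebe distortion on the shrinking domain of $G$ ensures that this local parametrization accounts for \emph{all} of $K_F$ near $2a_F$, giving $[\rho_k^{-1}(K_F - 2a_F)]_r \to [\mathscr{K}]_r$ in Hausdorff topology.

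For the parameter statement, define $\Psi_k(\mathbf{t}) := F_{\mathbf{t}}^{m + \ell k}(-a_{F_{\mathbf{t}}})$ in the local coordinate $\mathbf{t}$ centered at $F$, so $\Psi_k(0) = \alpha$ and the recursion $\Psi_{k+1}(\mathbf{t}) = F_{\mathbf{t}}^\ell(\Psi_k(\mathbf{t}))$ gives, after differentiating at $\mathbf{t} = 0$,
\begin{equation*}
\Psi_k'(0) = \lambda^k \Psi_0'(0) + A\,\frac{\lambda^k - 1}{\lambda - 1} = \lambda^k B + O(1),
\qquad B := \Psi_0'(0) + \frac{A}{\lambda - 1},
\end{equation*}
where $A = \partial_{\mathbf{t}} F_{\mathbf{t}}^\ell(\alpha)\big|_{\mathbf{t}=0}$. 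The repelling cycle continues holomorphically to $\alpha_{\mathbf{t}}$, and the linearizer $\phi_{\mathbf{t}}$ depends holomorphically on $\mathbf{t}$, so $u_k(\mathbf{t}) := \phi_{\mathbf{t}}^{-1}(\Psi_k(\mathbf{t}))$ is defined whenever $\Psi_k(\mathbf{t})$ is near $\alpha_{\mathbf{t}}$, with $u_k'(0) = \Psi_k'(0) - \dot\alpha \sim \lambda^k B$. Locally, $\mathbf{t} \in \mathscr{C}(\mathcal{S}_p)$ iff $u_k(\mathbf{t}) \in \mathscr{K}_{\mathbf{t}} := \phi_{\mathbf{t}}^{-1}(K_{F_{\mathbf{t}}})$, and since $\mathscr{K}_{\mathbf{t}} \to \mathscr{K}$ on compact sets as $\mathbf{t} \to 0$ (by holomorphic motion of the repelling scaffolding and of finite preimages), rescaling by $(B\lambda^k)^{-1}$ produces Hausdorff convergence to $\mathscr{K}$ on $D_r$. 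Comparing scales, the constant in the theorem is $q = B/D$.

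The main obstacle is verifying the transversality $B \neq 0$: this is the analog of the Douady--Hubbard--Tan Lei transversality that in the quadratic setting asserts non-vanishing of $\frac{d}{dc}[f_c^{m+\ell}(0) - f_c^m(0)]$ at a Misiurewicz $c$. In the present setting the computation must be carried out with respect to the local coordinate $\mathbf{t}$ on $\mathcal{S}_p$, using the parametrization developed in \cite{CM2:2010}; this is where the geometry specific to the cubic slices enters, and where the constant $q \neq 0$ must be pinned down. A secondary technical point is promoting pointwise convergence to Hausdorff convergence on $[\cdot]_r$: this requires uniform Koebe-type distortion estimates on the shrinking domains of $G$ and of $\phi_{\mathbf{t}}^{-1}$, together with uniform convergence $\mathscr{K}_{\mathbf{t}} \to \mathscr{K}$ on bounded sets, both of which follow from the holomorphic dependence of the relevant repelling data.
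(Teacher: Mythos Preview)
Your outline is essentially the same strategy the paper uses (Kawahira's approach adapted to $\mathcal{S}_p$): build the Poincar\'e function at the repelling cycle, show that both the dynamical rescaling $F^{\ell+km}(2a_F+\rho_k w)$ and the parameter rescaling $F_{Q\rho_k w}^{\ell+km}(2a(Q\rho_k w))$ converge to it, and read off the Hausdorff limits. Your constants match the paper's exactly (your $D$ is the paper's $A_0$, your $B$ is the paper's $B_0$, and indeed $\dot\alpha=-A/(\lambda-1)$ so $B=\Psi_0'(0)-\dot\alpha$ coincides with the derivative of $b(c)-s(c)$ at $0$; your $q=B/D$ is the paper's $1/Q$).

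Two remarks on the points you flag. First, the transversality $B\neq 0$ is not handled by a direct computation in the local parameter; the paper proves it in a separate section by an external-ray argument in the style of van Strien: one shows that if $\theta$ is a dynamic argument of $2a_F$ then the parameter ray of angle $\theta/\mu_h$ in the adjacent escape region lands at $F$, and then the conformal isomorphism $\mathcal{E}_h\cong\C\smallsetminus\overline{\mathbb D}$ forces the zero of $H_s(\lambda)=\bm r_{F_\lambda,\theta}(s)-2a(\lambda)$, and hence of $b(c)-s(c)$, to be simple. You will need something of this kind; the geometry of $\mathcal{S}_p$ enters precisely here.

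Second, your parameter-side Hausdorff step (``$\mathscr K_{\mathbf t}\to\mathscr K$, so the rescaled locus converges'') is a bit looser than what the paper does, and as written it hides a genuine issue: the membership condition $u_k(\mathbf t)\in\mathscr K_{\mathbf t}$ has a $\mathbf t$-dependent target, so Hausdorff convergence of $\mathscr K_{\mathbf t}$ alone does not immediately give the \emph{lower} inclusion $[\mathscr K]_r\subset N_\epsilon([C_k]_r)$ (you need to \emph{produce} parameters in $\mathscr C(\mathcal S_p)$, not merely know that $K_{F_{\mathbf t}}$ is close to $K_F$). The paper sidesteps this by a direct Hurwitz argument: approximate $\phi(w_0)\in J_F$ by a repelling periodic point, and find roots of $\chi_k(w)=G^d(\Phi_k(w))-\Phi_k(w)$ near $w_0$, which yields honest parameters with preperiodic free critical orbit and hence in $\mathscr C(\mathcal S_p)$. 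Either reorganize your argument along those lines, or make explicit why the holomorphic motion of repelling periodic points (plus openness of the super-attracting basin) suffices to manufacture such parameters.
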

In the theorem, $\mathscr{C}(\mathcal{S}_p)$ refers to the connectedness locus of the curve $\mathcal{S}_p$, and  the expression $\mathscr{C}(\mathcal{S}_p) - F$ indicates that we will center the magnification at the given Misiurewicz map $F$. We will set the map $F = F_{\bm{t}=0}$ using the  local parametrization. For any closed set $A\subset {\mathbb C}$, the compact set $[A]_r$ is defined as: 
\[
[A]_r = \big(\overline{\mathbb{D}(r)} \cap A\big) \cup \partial \mathbb{D}(r)~.
\]

\setcounter{equation}{0}
\setcounter{lem}{0}
\section{Similarity and The Quadratic Case}
Throughout this note, we use the \textbf{Hausdorff topology} on the non-empty compact subsets of $\C$, endowed by the Hausdorff distance between compact sets $A$ and $B$
\[
d(A,B) = \sup(\delta(A,B), \, \delta(B,A))
\]
where
\[
\delta(A,B) = \sup_{x \in A} d(x,B)
\]

We will require a notion of convergence in this topology. A sequence of compact sets $E_k$ \textbf{converges} to a limit set $E$ if for all $\epsilon > 0$, there exists $k_0 \in \N$ such that $E \subset N_\epsilon(E_k)$ and $E_k \subset N_\epsilon(E)$ for all $k \geq k_0$. Here $N_\epsilon(\cdot)$ denotes the open $\epsilon$-neighborhood of a given compact set in $\C$. In such a case, we will write that $E_k \to E$.

\begin{definition}\label{d-1}
A closed set $A \subset \C$ is \textbf{asymptotically $q$-self-similar} about a point $c \in \C$ if there is $r \in \R^+$ and a closed set $B$ such that
\[
[q^k(A-c)]_r \to [B]_r
\]
as $k \to \infty$ in the Hausdorff topology.  The set $B$ is automatically $q$-self-similar about $0$, and it is called the limit model of $A$ at $c$.

Two closed sets $A$ and $B$ are \textbf{asymptotically similar} about $0$ if there is $r \in \R^+$ such that in the Hausdorff-Chabauty distance in the window ${\mathbb D}_r$,
\[
\lim_{j \in \C,\, j \to \infty} d([jA]_r,\,[jB]_r)  = 0~.
\]
\end{definition}

In \cite[Proposition 2.2]{TL90:1990} the author proved that a sufficient condition  for the asymptotic similarity about $0$ of two closed subsets $A$ and $B$  of ${\mathbb C}$ is the existence of  $r \in \R^+$ and $q \in {\mathbb C}$ with $|q| > 1$ such that
\[
\lim_{n\to\infty} d([q^nA]_r, \,[q^nB]_r) = 0~.
\]

It follows from this viewpoint that if the two sequences of closed sets in Theorem~\ref{t-main} approach the same limit set as $n \to \infty$, then they are asymptotically similar about $0$; and so both sets, the connectedness locus $\mathscr{C}(\mathcal{S}_p)$ of $\mathcal{S}_p$, and the filled Julia set  $K_F$ of $F$, exhibit asymptotic self-similarity about the respective points. The method of proof for the main theorem consists in showing that a particular set associated with the parameter space and a set associated with the Julia set are both asymptotically $q$-self-similar to the same limit set, which will be defined as a particular preimage of the filled Julia set. This will demonstrate the claimed similarity.

\vspace{.25in}

For a quadratic map $f_c: z \mapsto z^2+c$, the \textbf{filled Julia set} $K_c$ of $f_c$ is the set of non-escaping points under iteration
\[
K_c = \big\{z \in \C : \,\text{$\{f_c^n(z)\}$ is bounded}\big\}
\]
and the \textbf{Julia set of $f_c$},  written $J_c$, is the boundary of $K_c$, so $J_c = \partial K_c$.

The \textbf{Mandelbrot set} $\mathbb{M}$ is the connectedness locus of the quadratic family, the collection of quadratic maps with connected Julia sets. It can also be described as the set of parameters $c \in \C$  such that the critical point $0$ is in the filled Julia set for $z^2+c$, as in
\[
\mathbb{M} = \{c \in \C :\, 0 \in K_c\}~.
\]

A \textbf{Misiurewicz point} is a parameter $c_0 \in \mathbb{M}$ such that the critical point $0$ is not a periodic point, but its orbit eventually enters a repelling periodic orbit. Formally, a point $c_0 \in \mathbb{M}$ is a Misiurewicz point if there exists minimal $\ell \geq 1$ and  $p \geq 1$ such that 
\[
f_{c_0}^\ell(0) = a_0
\]
and
\[
f_{c_0}^p(a_0) = a_0
\]
with $|(f_{c_0}^p)'(a_0)| > 1$.
The similarity of $\mathbb{M}$ and $J_c$ at the Misiurewicz point $c$ was proven in \cite{TL90:1990}, and a different proof of Tan Lei's similarity theorem was given about 30 years later in \cite{Kawa20:2020}. 

\begin{thm}[Tan Lei, 1990]
Let $c_0 \in \mathbb{M}$ be a Misiurewicz point.
There exists a non-constant entire function $\phi$ on $\C$, a sequence $\rho_k \to 0$, and a constant $q \neq 0$ such that if we set $\mathscr{J} = \phi^{-1}(J_{c_0}) \subset \C$, then for any large constant $r > 0$, we have
\begin{enumerate}
\item 
$[\rho_k^{-1}(J_{c_0} - c_o)]_r \to [\mathscr{J}]_r$
\item
$[\rho_k^{-1}q(\mathbb{M} - c_0)]_r \to [\mathscr{J}]_r$
\end{enumerate}
as $k \to \infty$ in the Hausdorff topology.
\end{thm}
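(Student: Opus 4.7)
The plan is to follow Kawahira's approach, constructing the Poincar\'e entire linearizer at the repelling periodic point $a_0$ and using it as a common coordinate in which both $(J_{c_0}-c_0)$ and $(\mathbb{M}-c_0)$ reveal the same $\lambda$-self-similar limit set $\mathscr{J}$. Let $\lambda=(f_{c_0}^p)'(a_0)$, so $|\lambda|>1$. Koenigs' theorem for the repelling fixed point $a_0$ of $f_{c_0}^p$ yields an entire function $L\colon\C\to\C$ with $L(0)=a_0$, $L'(0)=1$, and $L(\lambda w)=f_{c_0}^p(L(w))$ for all $w\in\C$. Total $f_{c_0}$-invariance of $J_{c_0}$ makes $L^{-1}(J_{c_0})$ invariant under $w\mapsto\lambda w$. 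I take $\phi:=L$ and $\mathscr{J}:=L^{-1}(J_{c_0})=L^{-1}(K_{c_0})$; the equality holds because $K_{c_0}=J_{c_0}$ for Misiurewicz parameters (the filled Julia set is a dendrite).

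For part (1), strict preperiodicity of $0$ forces the orbit $c_0\to f_{c_0}(c_0)\to\cdots\to a_0$ to avoid the critical point $0$, so $f_{c_0}^{\ell-1}$ is a local conformal isomorphism at $c_0$ with nonzero derivative $q_1:=(f_{c_0}^{\ell-1})'(c_0)$. Let $\sigma$ be the local inverse branch with $\sigma(a_0)=c_0$; then $\sigma(z)-c_0=q_1^{-1}(z-a_0)+O((z-a_0)^2)$. Setting $\rho_k:=(q_1\lambda^k)^{-1}$ and using $L(w)=a_0+w+O(w^2)$ near $0$ together with the $\lambda$-self-similarity of $\mathscr{J}$, a direct Taylor comparison yields
\[
\rho_k^{-1}(J_{c_0}-c_0)\ =\ q_1\lambda^k\bigl(\sigma(J_{c_0})-\sigma(a_0)\bigr)\ \to\ \lambda^k(J_{c_0}-a_0)\ \to\ L^{-1}(J_{c_0})\ =\ \mathscr{J}
\]
in $[\,\cdot\,]_r$ for every $r>0$; the error terms carry factors of $\lambda^{-k}$ and vanish in the Hausdorff limit.

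For part (2), the repelling orbit extends holomorphically in $c$ to a family $c\mapsto a_c$ with multiplier $\lambda_c$, with a corresponding family of local linearizers $L_c$ (with $L_{c_0}=L$). Using the identity $f_c^{\ell+kp}(0)=L_c(\lambda_c^k L_c^{-1}(f_c^\ell(0)))$, valid for $c$ near $c_0$, I define
\[
\Psi_k(w)\ :=\ L_{c(w)}^{-1}\bigl(f_{c(w)}^{\ell+kp}(0)\bigr)\ =\ \lambda_{c(w)}^k\,\eta(c(w)),\qquad c(w)\ :=\ c_0+\rho_k q^{-1}w,
\]
where $\eta(c):=L_c^{-1}(f_c^\ell(0))$ satisfies $\eta(c_0)=0$ and $\eta'(c_0)=D_\ell-\dot a$ with $\dot a:=a_c'|_{c_0}$. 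A chain-rule induction (summing the recursion $\partial_c f_c^{n+1}(0)=f_c'(z_n)\partial_c f_c^n(0)+1$) gives $D_k:=\partial_c[f_c^{\ell+kp}(0)]|_{c_0}=A\lambda^k(1+o(1))$ with $A:=D_\ell-\dot a\neq 0$. Choosing $q:=A/q_1$, the family $\{\Psi_k\}$ is normal and converges locally uniformly on $\C$ to the identity $w\mapsto w$. The equivalence chain
\[
c\in\mathbb{M}\ \iff\ f_c^{\ell+kp}(0)\in K_c\ \iff\ \Psi_k(w)\in L_c^{-1}(K_c),
\]
combined with $L_c^{-1}(K_c)\to L^{-1}(K_{c_0})=\mathscr{J}$ on bounded windows and $\Psi_k\to\mathrm{id}$ uniformly, yields the Hausdorff convergence $[\rho_k^{-1}q(\mathbb{M}-c_0)]_r\to[\mathscr{J}]_r$ via a Hurwitz-type argument.

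The main obstacle is this final passage in part (2): promoting locally uniform convergence $\Psi_k\to\mathrm{id}$ to Hausdorff convergence of the preimages of the moving target $L_c^{-1}(K_c)$. This requires (a) extending the local linearizer $L_c$ to agree with $L$ on arbitrarily large discs by iterating the functional equation $L_c(\lambda_c^m w)=f_c^{mp}(L_c(w))$ and applying Koebe distortion to propagate the $c_0$-comparison outward; (b) controlling the Hausdorff behaviour of $K_c$ near $a_c$ in shrinking parameter windows, where $K_c$ is generally only upper-semicontinuous in $c$; and (c) a Hurwitz-style argument to push pointwise convergence through the preimage operation. The $\lambda$-self-similarity of $\mathscr{J}$, built in from Koenigs' functional equation, is what forces the limits in (1) and (2) to coincide despite the different rescaling constants.
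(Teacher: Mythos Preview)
The paper does not itself prove Tan Lei's theorem; it is quoted as background and the paper's proof of the Main Theorem is the cubic analogue carried out via Kawahira's method. Comparing your proposal to that method (specialized to quadratics), your part~(1) is essentially the same argument: the Poincar\'e linearizer $L$ at $a_0$ plays the role of the paper's $\phi$, and the convergence $[\rho_k^{-1}(J_{c_0}-c_0)]_r\to[\mathscr{J}]_r$ is exactly the statement $[\phi_k^{-1}(K_F)]_r\to[\phi^{-1}(K_F)]_r$ in the paper's notation.

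The genuine divergence is in part~(2). You pull back through the \emph{inverse} linearizer, defining $\Psi_k(w)=L_{c(w)}^{-1}\bigl(f_{c(w)}^{\ell+kp}(0)\bigr)$ and aiming for $\Psi_k\to\mathrm{id}$ together with convergence of the moving target $L_c^{-1}(K_c)\to\mathscr{J}$. Kawahira's device, which the paper follows, is to avoid inverting $L_c$ altogether: one works with the \emph{forward} entire maps $\Phi_k(w)=f_{c(w)}^{\ell+kp}(0)$ and proves directly that $\Phi_k\to\phi$ uniformly on compacta (this is Lemma~4.1(b) in the paper). The parameter convergence is then obtained without ever forming $L_c^{-1}(K_c)$: for the inclusion $[C_k]_r\subset N_\epsilon([\mathscr{J}]_r)$ one uses that $\phi(w)\notin K_{c_0}$ implies $|F^N\circ\phi(w)|$ is large, hence $|f_{c(w)}^N\circ\Phi_k(w)|$ is large, so the critical orbit escapes; for $[\mathscr{J}]_r\subset N_\epsilon([C_k]_r)$ one approximates $\phi(w_0)\in J_{c_0}$ by a repelling periodic point and applies Hurwitz to $w\mapsto f_{c(w)}^d(\Phi_k(w))-\Phi_k(w)$ to produce parameters with preperiodic critical orbit.

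The obstacles you list --- extending $L_c^{-1}$ to large discs, controlling the only upper-semicontinuous family $c\mapsto K_c$ through the local inverse, and pushing Hausdorff convergence through a preimage of a moving set --- are real, and your proposal does not resolve them; it names them. Kawahira's reformulation simply sidesteps all three, because $\Phi_k$ is globally defined and entire, the comparison set $K_{c_0}$ is fixed rather than moving, and the two Hausdorff inclusions are handled by an escape estimate and a Hurwitz root-count respectively, neither of which needs continuity of $c\mapsto K_c$. Your transversality input $A=D_\ell-\dot a\neq 0$ is the same as the paper's $B_0\neq 0$, and your choice of $q$ matches the paper's $q=1/Q=B_0/A_0$; so the constants line up, but the route through $L_c^{-1}$ is the harder one and is left incomplete here.
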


\setcounter{equation}{0}
\setcounter{lem}{0}
\section{The Curve $\mathcal{S}_p$}
The curve $\mathcal{S}_p$, introduced in \cite{CM1:2009}, is the space of all monic centered cubic polynomial maps $F$ with a \textbf{marked critical point} $a = a_F$ of period $p \geq 1$, and a \textbf{free critical point} $-a = -a_F$. If $v = F(a)$ is the corresponding critical value, then Milnor obtained the normal form
\[
F(z) = F_{a,v}(z) = z^3 - 3a^2z + (2a^3 + v)~.
\]

We identify $\mathcal{S}_p$ with the smooth affine curve consisting of all pairs $(a,v) \in \C^2$ such that the marked critical point $a$ is periodic of minimal period $p$ under iteration of $F_{a,v}$. In order to reference the marked critical point or corresponding critical value of a cubic map $F$, denote them by $a_F$ and $v_F$ respectively (the subscript is suppressed if the map referenced is clear from context).

In this notation, the \textbf{filled Julia set} of $F$ is
\[
K_F = \{z \in \C : \, \text{$\{F^n(z)\}$ is bounded}\}
\]
and the \textbf{Julia set} $J_F = \partial K_F$. The \textbf{connectedness locus} $\mathscr{C}(\mathcal{S}_p)$ of $\mathcal{S}_p$, is defined as either
\[
\mathscr{C}(\mathcal{S}_p) = \{F \in \mathcal{S}_p :\, \text{$K_F$ is connected}\}
\]
or, equivalently, as
\[
\mathcal{S}_p \smallsetminus \bigsqcup_h \mathcal{E}_h
\]
where $\mathcal{E}_h$ are the disjoint \textbf{escape regions} of $\mathcal{S}_p$, i.e., the open regions where pairs $(a,v)$ correspond to cubic maps such that the orbit of the free critical point $-a$, and equivalently the orbit of the corresponding point $2a_F$, is unbounded. The point $2a_F$ is called the \textbf{free co-critical point of $F$}, and it has the same image as $-a_F$.

Following the study of the similarity at the Misiurewicz points of the Mandelbrot set, one can redefine the connectedness locus of the curve $\mathcal{S}_p$ as, 
\[
\mathscr{C}(\mathcal{S}_p) = \big\{F \in \mathcal{S}_p :\, \text{$\{F^n(2a_F)\}$ is bounded}\big\} = \big\{F \in \mathcal{S}_p :\, \text{$\{F^n(-a_F)\big\}$ is bounded}\}
\]

More generally, the Julia set of a cubic polynomial is connected if the orbits of both critical points $a_F$ and $-a_F$ are bounded, but since the marked critical point $a_F$ is periodic by definition, connectedness depends only on the orbit of the free critical point $-a_F$. 

In analogy with the quadratic family, we define a \textbf{Misiurewicz map} $F \in \mathcal{S}_p$ as: 

\begin{definition}\label{d-2}
A map $F = F_{a,v}$ is a \textbf{Misiurewicz map} if there exists an $\ell \geq 1$ and a minimal $m \geq 1$ such that
\[
F^\ell_{a,v}(-a_F) =  F^\ell_{a,v}(2a_F) = a_0
\]
and
\[
 F^m_{a,v}(a_0) = a_0
\]
with
\[
\big|(F^m_{a,v})'(a_0)\big| > 1 ~.
\]
The \textbf{preperiod} of such a map is defined as the ordered pair $(\ell, m)$.
\end{definition}

A major difference between the study of Misiurewicz maps in  the curve $\mathcal{S}_p$ and  Misiurewicz points in the Mandelbrot set $\mathbb{M}$ is that the filled Julia set at a Misiurewicz point in $\mathbb{M}$ has empty interior, while the Misiurewicz maps in $\mathcal{S}_p$ must have an attracting cycle, and hence a non-trivial interior, see Figure 2. Thus, we want to look at the filled Julia set of a Misiurewicz map instead of just the Julia set, as was the case in the study of $\mathbb{M}$.

\begin{figure}[h]
    \centering
    \includegraphics[scale=.4]{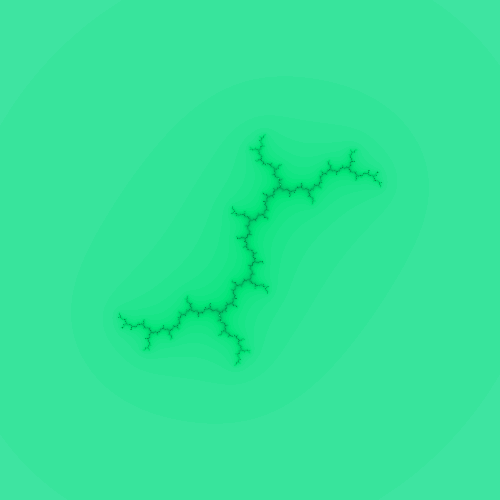}
    \includegraphics[scale=.125]{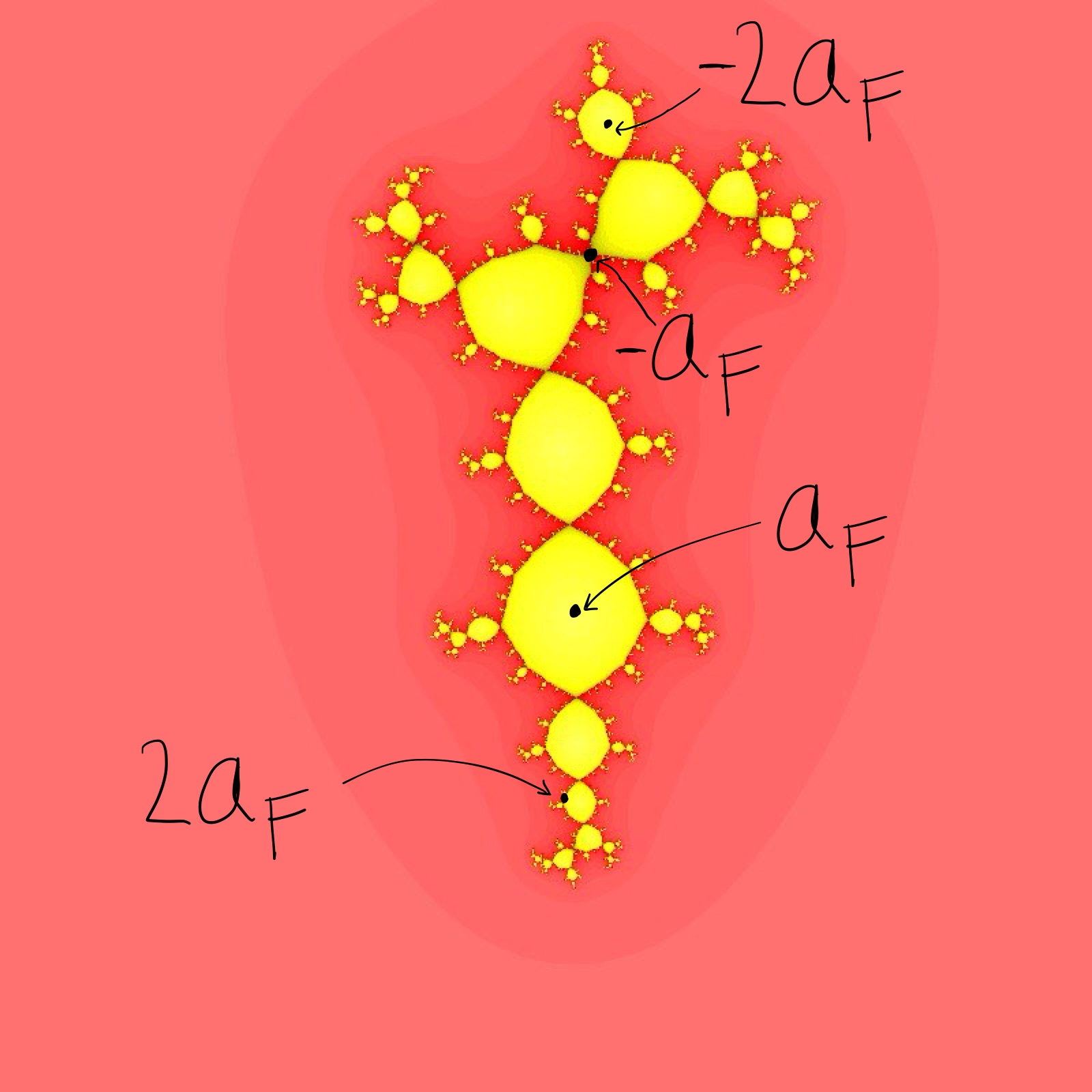}
    \caption{\textsf{\textbf{Left}: The Julia set of the Misiurewicz map $z^2 - i$. \textbf{Right}: The filled Julia set of a Misiurewicz map in $\mathcal{S}_2$, with the critical and co-critical points labeled.}
    \label{fig:enter}}
\end{figure}


\vspace{.25in}

The difficulty in the study of  similarity in the cubic case comes from the dimension. In the quadratic case, elements of $\mathbb{M}$ and $J_c$ can be identified with a point $z_0 \in \C$. This is true for Julia sets of cubic maps as well, but the connectedness locus $\mathscr{C}(\mathcal{S}_p)$ of the curve $\mathcal{S}_p$ is identified by points $(a,v) \in \C^2$. Thus, as discussed above, a local parametrization of $\mathcal{S}_p$ is required to study the local similarity behavior of $\mathcal{S}_p$ near a Misiurewicz map. 

\setcounter{equation}{0}
\setcounter{lem}{0}

\section{Similarity between $\mathscr{C}(\mathcal{S}_p)$ and $K_F$ for a Misiurewicz Map}

In general, as stated in \cite{CM2:2010}, there is no global single variable parametrization of the curve $\mathcal{S}_p$. There are some exceptions for lower periods, as $\mathcal{S}_1$ and $\mathcal{S}_2$ can be parametrized onto $\C$. For example,  if $a_F$ is a fixed point,  $F \in \mathcal{S}_1$, hence the critical value $v_F$ must be equal to the critical point $a_F$. The map
\[
F \in \mathcal{S}_1 \,\to\, (a_F,\, a_F) \in \C \times \C
\]
is its associated coordinate on the curve, and so the curve is isomorphic to $\C$ by the natural projection map. If $a_F$ is periodic of minimal period two,  $F \in \mathcal{S}_2$, then there is a series of affine coordinate changes one can perform to obtain a variable $\delta$ (see \cite{CM1:2009} or \cite{CM4}). Under this change of coordinates the marked critical point $a = -(\delta+1/\delta)/3$, and  $F \in \mathcal{S}_2$  takes the form:
\[
F(z) = z^3 - 3\left(\frac{\delta + 1/\delta}{3} \right)^2 z \,+\, \left(-2\left(\frac{\delta + 1/\delta}{3} \right)^3 \, -\,\left(\frac{\delta + 1/\delta}{3} \right)\right)
\]

\begin{figure}[h]
    \centering
    \includegraphics[scale=.25]{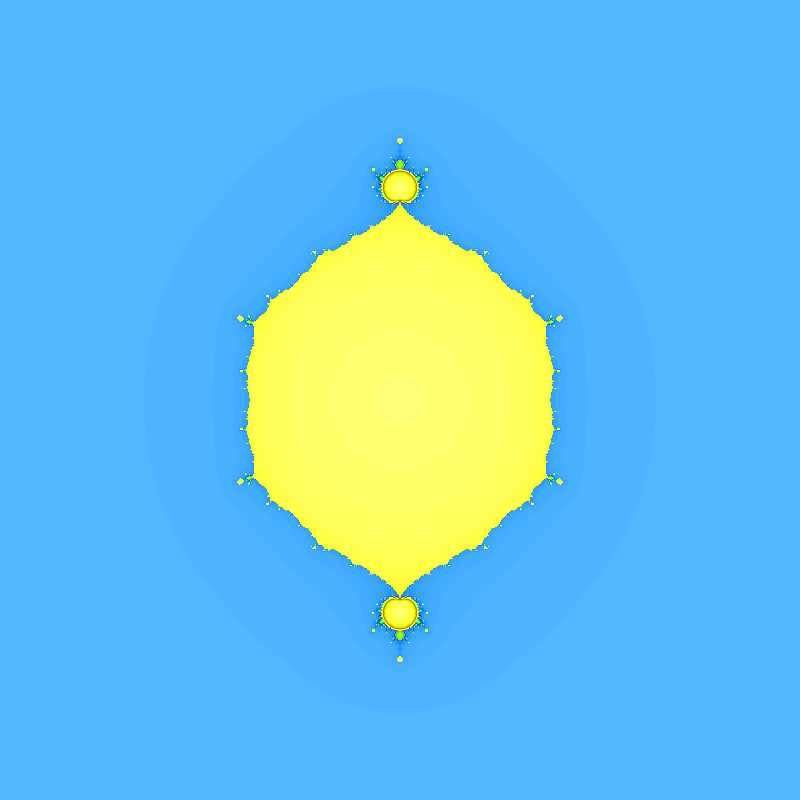}
    \includegraphics[scale=.25]{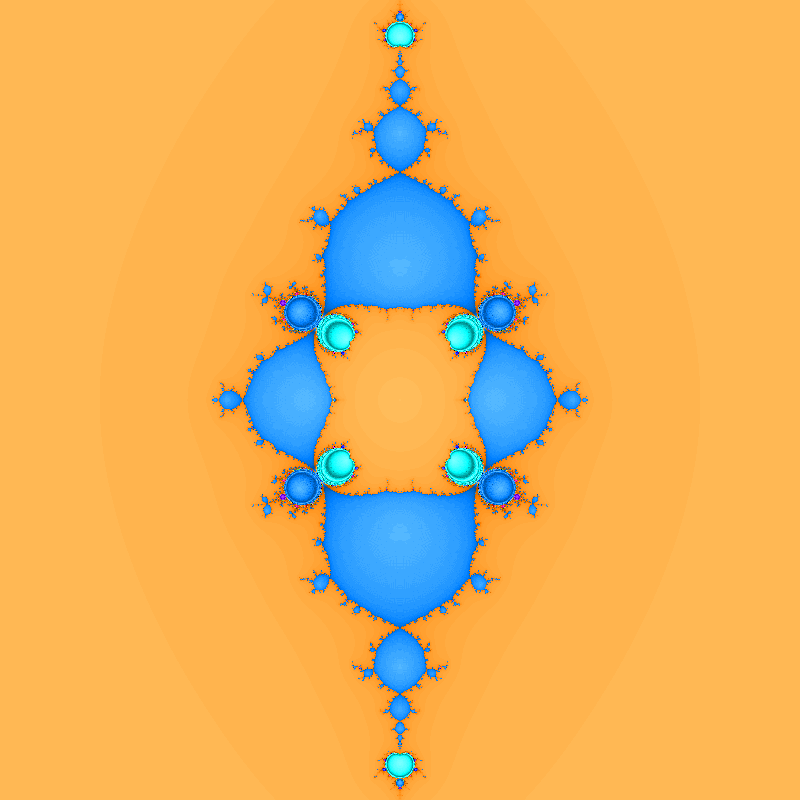}
    \caption{\textsf{The canonical projections of $\mathcal{S}_1$ and $\mathcal{S}_2$ into $\C$ and $\C^*$ respectively.}
    \label{fig:enter}}
\end{figure}

Finally, the curve $\mathcal{S}_3$ has a \textbf{torus coordinate} from an affine coordinate on the universal covering of $\overline{\mathcal{S}_3}$, as described in \cite{CM4}. This is a parametrization accounting for the 8 punctures of $\mathcal{S}_3$. 

In \cite{CM3}, the authors remark that for $p>2$ the \textbf{canonical coordinate} can only be defined as a local coordinate, since it ramifies at most points. (For more details see \cite{CM4}.) 


For this reason, we will use the local parametrization of $\mathcal{S}_p$ introduced in \cite{CM2:2010}. Let $\eta: U \to \C$ with $\eta(z_1, z_2)$  a holomorphic map without critical points throughout some neighborhood $U$ of $\mathcal{S}_p$, such that $\eta|_{\mathcal{S}_p} \equiv 0$. As such, $\bm{t}$ is well defined near any point of $\mathcal{S}_p$ up to translation in the $\bm{t}$-plane by the Hamiltonian system of differential equations
\begin{eqnarray*}
\dfrac{dz_1}{d\bm{t}} &=& \dfrac{\partial \eta}{\partial z_2}\\
\dfrac{dz_2}{d\bm{t}} &=& -\dfrac{\partial \eta}{\partial z_1}~.
\end{eqnarray*}

The natural choice for $\eta$ is
\[
F \mapsto \eta(a_F,v_F) = F^p(a_F) - a_F~.
\]
From this, it follows that all maps $G \in \mathcal{S}_p$ near a given Misiurewicz map $F$ can be represented by a complex number $\bm{t}$. The uniqueness up to translation means that the Misiurewicz map $F$ can correspond to the local coordinate $\bm{t} = 0$. The local parametrization will allow for single variable perturbations in the parameter plane, and, as such, allows for simpler local calculations near the Misiurewicz map in $\mathcal{S}_p$.

\vspace{.25in}

Using this idea, let $F_0 = F$ be a Misiurewicz map in $\mathcal{S}_p$. 

By definition, we have that $F_0^\ell(2a_F) = a_0$, and $F_0^{m}(a_0) = a_0$ with $|(F_0^{m})'(a_0)| > 1$. By the Implicit Function Theorem, there exists a neighborhood $U$ of $0$ in the $\bm{t}$-plane and a holomorphic map $s(\bm{t})$ on $U$ such that $s(0) = a_0$, $s(t) = F_{\bm{t}}^m (s(\bm{t}))$, and $|(F_{\bm{t}}^{m})'(s(\bm{t}))| > 1$.


Within the domain of definition of the local parameter, the repelling periodic point for maps in $U$ is well defined. (One can call $U$ the intersection of the open set from the Implicit Function Theorem and the domain of definition of $\bm{t}$.) The upshot is that in the proof of Theorem~\ref{t-main}, we can zoom in until our window is contained within this open set $U$. This is in contrast with the case of the Mandelbrot set, where the critical value $c$ acts as a global parameter that requires restriction in order to get the continuously defined repelling periodic point. However, in $\mathcal{S}_p$, the local parameter is sufficient.


\subsection*{Proof of Theorem~\ref{t-main}}
The proof of the Main Theorem will follow from the following Lemma:
\begin{lem}\label{l-1}
Suppose that $F = F_{0}$ is a Misiurewicz map of preperiod $(\ell,m)$ with $\ell, m > 0$. For $k \in \N$, set
\begin{equation}\label{e-1}
\rho_k  = \frac{1}{(F^{\ell+km})'(2a_F)}
\end{equation}

Then we have the following:
\begin{itemize}
\item[$(a)$] 
The function $\phi_k(w) = F^{\ell+km}(2a_F + \rho_kw)$ converges uniformly on any compact set to a non-constant entire function $\phi: \C \to \C$ as $k \to \infty$.
\item[$(b)$]
There exists a constant $Q \neq 0$ such that the sequence of functions
\[
\Phi_k(w) = F_{Q\rho_kw}^{\ell+km} \big(2a(Q\rho_k w)\big)
\]
converges to the same function $\phi$ as $k \to \infty$ uniformly on compact sets.
\end{itemize}
\end{lem}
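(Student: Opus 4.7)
The plan is to identify the limit $\phi$ explicitly as the \emph{Koenigs linearization} at the repelling periodic point $a_0$ of $F_0^m$, following Kawahira's strategy from \cite{Kawa20:2020} but carried out in the local parameter $\bm{t}$. Set $\mu := (F^\ell)'(2a_F)$ and $\lambda := (F_0^m)'(a_0)$, so that the chain rule, together with $F_0^\ell(2a_F) = a_0$ and $F_0^m(a_0) = a_0$, gives $(F^{\ell+km})'(2a_F) = \mu\,\lambda^k$, hence $\rho_k = 1/(\mu\lambda^k)$. Since $|\lambda| > 1$, there is an entire function $L:\C\to\C$ with $L(0) = a_0$, $L'(0) = 1$, and $L(\lambda u) = F_0^m(L(u))$ for all $u \in \C$; its local inverse $\psi = L^{-1}$ near $a_0$ satisfies $\psi\circ F_0^m = \lambda\cdot\psi$ and $\psi'(a_0)=1$. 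The claim will be that $\phi = L$, which is automatically non-constant since $L'(0) = 1$.

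For part $(a)$, Taylor expansion at $2a_F$ gives, uniformly for $w$ in any compact set,
\[
F_0^\ell(2a_F + \rho_k w) \;=\; a_0 + \mu\rho_k w + O(\rho_k^2) \;=\; a_0 + w/\lambda^k + O(\lambda^{-2k}).
\]
Applying $\psi$ and using $\psi'(a_0) = 1$ yields $\psi\bigl(F_0^\ell(2a_F+\rho_k w)\bigr) = w/\lambda^k + O(\lambda^{-2k})$, and the linearization identity $F_0^{km}\circ L = L\circ(\lambda^k\cdot)$ then gives
\[
\phi_k(w) \;=\; F_0^{km}\bigl(F_0^\ell(2a_F + \rho_k w)\bigr) \;=\; L\bigl(w + O(\lambda^{-k})\bigr) \;\longrightarrow\; L(w),
\]
uniform convergence on compacts following from uniform continuity of $L$ on bounded sets.

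For part $(b)$, the implicit function theorem produces a holomorphic continuation $s(\bm{t})$ of $a_0$ on a neighborhood of $\bm{t}=0$ with $F_{\bm{t}}^m(s(\bm{t})) = s(\bm{t})$ and multiplier $\lambda(\bm{t})$, $\lambda(0) = \lambda$, together with a holomorphic family of Koenigs linearizations $L_{\bm{t}}:\C\to\C$, $L_{\bm{t}}(0) = s(\bm{t})$, $L_{\bm{t}}'(0) = 1$, $L_{\bm{t}}(\lambda(\bm{t})u) = F_{\bm{t}}^m(L_{\bm{t}}(u))$. The natural transversality constant is
\[
\alpha \;:=\; \frac{d}{d\bm{t}}\Bigl[F_{\bm{t}}^\ell(2a(\bm{t})) - s(\bm{t})\Bigr]\bigg|_{\bm{t}=0}.
\]
Granting $\alpha \neq 0$, set $Q := \mu/\alpha$. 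For $\bm{t} = Q\rho_k w$ one has $F_{\bm{t}}^\ell(2a(\bm{t})) - s(\bm{t}) = \alpha \bm{t} + O(\bm{t}^2)$, so $\psi_{\bm{t}} = L_{\bm{t}}^{-1}$ applied to it is $\alpha\bm{t} + O(\bm{t}^2)$, and the parametrized linearization identity yields
\[
\Phi_k(w) \;=\; L_{\bm{t}}\bigl(\lambda(\bm{t})^k\,\alpha\bm{t} \;+\; O(\lambda(\bm{t})^k\,\bm{t}^2)\bigr).
\]
Since $k\bm{t} = O(k/\lambda^k) \to 0$, the elementary estimate $(\lambda(\bm{t})/\lambda)^k \to 1$ holds; therefore $\lambda(\bm{t})^k\,\alpha\bm{t} \to \alpha Q w/\mu = w$ and $\lambda(\bm{t})^k\,\bm{t}^2 \to 0$. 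Continuity of the family $\bm{t}\mapsto L_{\bm{t}}$ then forces $\Phi_k(w) \to L_0(w) = \phi(w)$, uniformly on compacts.

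The hard part will be the verification that $\alpha \neq 0$. This is the cubic $\mathcal{S}_p$ analogue of the standard Misiurewicz transversality in the quadratic family, and it is the step most sensitive to the specific local parametrization from the Hamiltonian construction of \cite{CM2:2010}. I would argue by contradiction: if $\alpha = 0$, the identity $F_{\bm{t}}^\ell(2a(\bm{t})) = s(\bm{t}) + O(\bm{t}^2)$ combined with $\eta\equiv 0$ on $\mathcal{S}_p$ and repeated differentiation of the periodicity relation $F_{\bm{t}}^m(s(\bm{t})) = s(\bm{t})$ would force an algebraic coincidence between the orbit of the free co-critical point and the repelling cycle that is incompatible with $|\lambda|>1$ and the non-degeneracy of the local coordinate $\bm{t}$. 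Once $\alpha \neq 0$ is established, both $(a)$ and $(b)$ reduce to the routine Koenigs-coordinate bookkeeping sketched above, and the constant $Q = \mu/\alpha$ is the unique one that aligns the two limits.
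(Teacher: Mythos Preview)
Your treatment of parts $(a)$ and $(b)$ is essentially the paper's own argument, just with different names: your $\mu$, $\lambda$, $\alpha$, $L$ are the paper's $A_0$, $\lambda_0$, $B_0$, $\phi$, and your $Q=\mu/\alpha$ is exactly the paper's $Q=A_0/B_0$. The ``Koenigs linearization'' you invoke is precisely the Poincar\'e function the paper constructs via the auxiliary map $g(w)=F^m(a_0+w)-a_0$, and your use of the family $L_{\bm t}$ matches the paper's family $\phi^c$. So on the analytic side there is nothing to add.

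The genuine gap is the transversality step $\alpha\neq 0$. Your proposed contradiction argument (``an algebraic coincidence\ldots incompatible with $|\lambda|>1$ and the non-degeneracy of the local coordinate $\bm t$'') is not a proof: higher-order contact $F_{\bm t}^\ell(2a(\bm t))-s(\bm t)=O(\bm t^2)$ is perfectly compatible with $|\lambda|>1$ on formal grounds, and ``non-degeneracy of $\bm t$'' only tells you the coordinate is locally injective, not that this particular holomorphic function has a simple zero. The paper does \emph{not} argue this way. It devotes a separate section to the transversality, proving it by an external-ray argument in the style of the Orsay notes: one shows that for $s>0$ the equation $2a(\lambda)=\bm r_{F_\lambda,\theta}(s)$ has a \emph{unique} solution near $\lambda=0$, because via the B\"ottcher coordinate and the conformal isomorphism $\mathcal A:\mathcal E_h\to\C\setminus\overline{\mathbb D}$ of the bordering escape region it becomes $\mathcal A(F_\lambda)=e^{(s+2\pi i\theta)/\mu_h}$, which has multiplicity one since $\mathcal A$ is biholomorphic. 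Hurwitz then forces the multiplicity $\nu$ of the zero at $\lambda=0$ to equal $1$. You should either reproduce that argument or cite it; the differentiation-of-the-periodicity-relation idea you sketch does not close the gap.
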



The Lemma relies on the existence of the non-constant entire function $\phi$, referred to in \cite{Kawa20:2020} as a \textbf{Poincar\'e function}. The following theorem describes how $\phi$ is constructed

\begin{thm}
Let $g: \C \to \C$ be an entire function with $g(0) = 0$, $g'(0) = \lambda$, and $|\lambda| > 1$. Then the sequence
\[
\psi_n(w) = g^n(w/\lambda^n)
\]
converges uniformly on compact sets in $\C$. Moreover, the limit function $\psi: \C \to \C$ satisfies 
\[
\text{$(g \circ \psi)(w) = \psi(\lambda w)$ \hspace{.05in}and \hspace{.05in}$\psi'(0) = 1$}
\]
\end{thm}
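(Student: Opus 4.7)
The plan is the classical Koenigs/Poincar\'e construction for a repelling fixed point. I would proceed in two stages: first establish uniform convergence of $\psi_n$ on a fixed small disk around $0$, then extend $\psi$ to an entire function using the functional equation. Writing $g(z) = \lambda z + h(z)$ with $|h(z)| \le C|z|^2$ on some $\overline{\mathbb{D}(r_0)}$, the key intuition is that although $(g^n)'(0) = \lambda^n$ grows geometrically, the rescaling $w \mapsto w/\lambda^n$ in the argument is tuned exactly to cancel this expansion, so the composition $g^n(w/\lambda^n)$ remains well-behaved.

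For the local convergence, fix $R > 0$ and take $n$ large enough that $R/|\lambda|^n < r_0$. For $|w| \le R$, set $a_k = g^k(w/\lambda^n)$ and check inductively that $|a_k| \le |w|/|\lambda|^{n-k} \cdot (1 + o(1))$, so the entire orbit lies in a fixed compact subset of $\overline{\mathbb{D}(r_0)}$. To see that $(\psi_n)$ is Cauchy I would write
\[
\psi_{n+1}(w) - \psi_n(w) = g^n\bigl(g(w/\lambda^{n+1})\bigr) - g^n(w/\lambda^n),
\]
observe that the two inner arguments differ by $h(w/\lambda^{n+1})$, bounded by $C|w|^2/|\lambda|^{2(n+1)}$, and combine this with a uniform bound on $(g^n)'$ along the orbit to obtain
\[
|\psi_{n+1}(w) - \psi_n(w)| \le C' R^2 / |\lambda|^{n+2},
\]
which is geometrically summable. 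Hence $\psi_n \to \psi$ uniformly on $|w| \le R$, giving a holomorphic limit on this disk.

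The normalization $\psi'(0) = 1$ is immediate: $\psi_n'(0) = (g^n)'(0) \cdot \lambda^{-n} = \lambda^n \cdot \lambda^{-n} = 1$ for every $n$, and the uniform convergence lets me pass to the limit. In particular $\psi$ is non-constant. For the functional equation, a direct computation gives $g(\psi_n(w)) = g^{n+1}(w/\lambda^n) = \psi_{n+1}(\lambda w)$; sending $n \to \infty$ yields $g(\psi(w)) = \psi(\lambda w)$ wherever both sides are defined. To promote $\psi$ from its initial disk of definition to an entire function, I use this equation in reverse: if $\psi$ is holomorphic on $\mathbb{D}(r)$, then for $w \in \mathbb{D}(|\lambda| r)$ I set $\psi(w) := g(\psi(w/\lambda))$, and iterating extends $\psi$ holomorphically to $\mathbb{D}(|\lambda|^k r)$ for every $k$, hence to all of $\C$.

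The main obstacle is the derivative estimate $|(g^n)'(a_0)| \lesssim |\lambda|^n$ along the orbit $\{a_k\}$, used in the Cauchy step. Via $(g^n)'(a_0) = \prod_{k=0}^{n-1} g'(a_k)$ and $g'(a_k) = \lambda\bigl(1 + O(|a_k|)\bigr) = \lambda\bigl(1 + O(|\lambda|^{-(n-k)})\bigr)$, this reduces to showing that $\prod_{k=0}^{n-1}\bigl(1 + O(|\lambda|^{-(n-k)})\bigr)$ stays bounded by a constant depending only on $R$, not on $n$. Since the logarithms of these factors form a geometrically summable series over $k$, the product is uniformly bounded, which closes the argument and delivers the required estimate.
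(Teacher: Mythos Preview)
The paper does not prove this theorem: it is stated without proof as the classical Poincar\'e linearization theorem (with a pointer to \cite{Kawa20:2020} for the terminology ``Poincar\'e function'') and then applied immediately in the proof of Lemma~\ref{l-1}. So there is no argument in the paper to compare yours against.

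Your proposal is the standard, correct construction. The Cauchy estimate is set up properly: the two inner points $g(w/\lambda^{n+1})$ and $w/\lambda^n$ differ by exactly $h(w/\lambda^{n+1}) = O(|\lambda|^{-2n})$, and your product bound $\prod_{k=0}^{n-1}\bigl(1 + O(|\lambda|^{-(n-k)})\bigr) = O(1)$ gives $|(g^n)'|\lesssim |\lambda|^n$ uniformly on the relevant segment, yielding a geometrically summable telescoping series. One small remark: your extension step, as written, shows that the limit $\psi$ extends from the initial small disk to an entire function, but the statement also claims that $\psi_n \to \psi$ uniformly on \emph{every} compact set. This follows at once from the identity $\psi_n(w) = g\bigl(\psi_{n-1}(w/\lambda)\bigr)$, which bootstraps uniform convergence from $\overline{\mathbb{D}(R)}$ to $\overline{\mathbb{D}(|\lambda|R)}$ and hence to all of $\C$; you may want to make that step explicit.
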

Once we choose an appropriate function $g$, the entire function $\phi$ can be constructed.

\begin{proof}[\textbf{Proof of Lemma~\ref{l-1}}]
As above, $F$ is a Misiurewicz map with preperiod $(\ell, m)$. Let
\[
g(w) = F^m(a_0 + w) - a_0~.
\]
It is the case that $g(0) = F^m(a_0) - a_0 = 0$, and that $|g'(0)| = |(F^m)'(a_0)| = |\lambda_0| > 1$ by construction. Therefore, we have that
\[
\psi_n(w) = g^n(w/\lambda_0^n) 
\]
converges uniformly in compact sets to a limit function $\psi^*: \C \to \C$. Finally, note that
\begin{equation}\label{e-2}
w \mapsto F^{km}\left(a_0 + \frac{w}{\lambda_0^k}\right) = g^k\left(\frac{w}{\lambda_0^k}\right) + a_0
\end{equation}
and since we have uniform convergence to $\psi^* + a_0$ as $k \to \infty$, let $\phi(z) = \psi^*(z) + a_0$. Hence, Equation~(\ref{e-2}) converges to $\phi$ uniformly on compact sets.
First, we establish the convergence stated in Lemma~\ref{l-1} item $(a)$. Set
\[
A_0 = (F^\ell)'(2a_F) \neq 0
\]
and from Equation~(\ref{e-1}) it follows that
\begin{equation}\label{e-2a}
(F^{\ell+km})'(2a_F) = A_0\lambda_0^k = \frac{1}{\rho_k} ~.
\end{equation}
For sufficiently small $\delta \in \C$,
\begin{equation}\label{e-3}
F^\ell(2a_F + \delta) = a_0 + A_0 \cdot \delta + o(\delta)~.
\end{equation}
Fix an arbitrarily large compact set $E \subset \C$, and take any $w \in E$. Now, set
\[
\delta = w/(A_0\lambda_0^k)~.
\]
The notation $\sim$ below means that the difference between the two elements of the sequence converges to $0$ uniformly on compact sets. From Equation~(\ref{e-3}), it follows that 
\[
F^{km}\left(a_0 + \frac{w}{\lambda_0^k}\right) \sim F^{\ell+km}\bigg(2a_F + \frac{w}{A_0\lambda_0^k} + o(\lambda_0^{-k})\bigg)
\]
and, since $|\lambda_0| >1$
\[
F^{\ell+km}\bigg(2a_F + \frac{w}{A_0\lambda_0^k} + o(\lambda_0^{-k})\bigg) \sim F^{\ell+km}(2a_F + \rho_k w) = \phi_k(w)
\]
as $k \to \infty$. Therefore, we obtain that
\[
\phi(w) = \lim_{k \to \infty} \phi_k(w)~.
\]

Now, it needs to be shown that the sequence in  Lemma~\ref{l-1} item $(b)$ converges to this same entire function $\phi$. Suppose $Q \in \C^*$ is a constant and set
\begin{equation}\label{e-4}
c = c(w) = Q\rho_kw~.
\end{equation}
For these purposes, we denote by $G$ the function
\begin{eqnarray}
G &= &F_{c},  \quad \textrm{so} \nonumber\\
\Phi_k(w) &= &G^{\ell+km}\big(2a(c)\big),   \quad \textrm{and} \label{e-5a}\\
b(c) & = &G^\ell\big(2a(c)\big)~. \label{e-5b}
\end{eqnarray}
where $a(c)$ is the marked critical point $a_G$.

Therefore, for $c$ as in Equation~(\ref{e-4}), denote by $s(c)$ the repelling periodic point of $G$ of period $m$ with $s(0) = a_0$ defined as in Definition~\ref{d-2}, and $\lambda(c)$ denotes its multiplier
\[
\lambda(c) = |(G^m)'(2s(c))|~.
\]

The Poincar\'e function theorem implies that the sequence
\[
\phi_k^c(w) = G^{km}\bigg(s(c) + \frac{w}{\lambda(c)^k}\bigg)
\]
converges uniformly to an entire function $\phi^c(w)$ on compact sets. This can be shown by an identical calculation to the Poincar\'e function calculation in the proof of Lemma~\ref{l-1} item $(a)$.
The map
\[
c ~\mapsto~\phi^c(w)
\]
is holomorphic near $c = 0$. 


Note that at $c=0$, we get that
\[
\phi_k^{0}(w) = G^{km}\bigg(s(0) + \frac{w}{\lambda(0)^k}\bigg) = F^{km}\bigg(a_0 + \frac{w}{\lambda_0^k}\bigg) = \phi_k(w)
\]
which means that if we show that the $\Phi_k \to \phi^c$, then its convergence to $\phi(w)$ is shown.


The remainder of the argument requires transversality with respect to the local parameter $\bm{t}$ about the root $c(w) = 0 \to w = 0$ (since $Q$ and $\rho_k$ are non-zero), which we denote simply as $c = 0$. See Section 5 for details.

By construction, and Corollary~\ref{c-1}, the equation
\[
b(c) - s(c) = 0
\]
has a simple root at $c = 0$, and as such there exists $B_0 \neq 0$ such that we can linearize at $c=0$ as
\[
b(c) - s(c) = B_0c + o(c)~.
\]


Therefore, from Equation~(\ref{e-2a}) and Equation~(\ref{e-4}) we have that
\begin{eqnarray}
b(c) & =& s(c) + B_0\cdot Q\rho_k w + o(\rho_k) \nonumber \\
&=& s(c) + \frac{B_0Q}{A_0}\cdot \frac{\lambda(c)^k}{\lambda_0^k} \cdot \frac{w}{\lambda(c)^k} + o(\rho_k)~. \label{e-6}
\end{eqnarray}

Now, since $\lambda(c)$ is a holomorphic function of $c$, and thus $\lambda(c) = \lambda_0 + O(c)$, we have that
\[
\left|\frac{\lambda(c)}{\lambda_0} - 1\right| = O(c)~.
\]
This implies that
\[
\log\bigg(\frac{\lambda(c)^k}{\lambda_0^k}\bigg) = k \cdot O(c) = O\bigg(\frac{k}{\lambda_0^k}\bigg) \to 0 \qquad \textrm{as} \quad k \to \infty~.
\]
 Hence,
\[
\frac{\lambda(c)^k}{\lambda_0^k} \to 1 \qquad \textrm{as} \quad k \to \infty~.
\]

From Equations~(\ref{e-5a}) and (\ref{e-5b}); and setting $Q=A_0/B_0$ in Equation~(\ref{e-6}), we obtain
\[
\Phi_k(w) = G^{km}(b(c)) = G^{km}\bigg(s(c) + \frac{w}{\lambda(c)^k}+ o(\rho_k)\bigg)
\]
since the right hand side of this equation converges uniformly  to $\phi^c(w)$, as $k \rightarrow \infty$, we get
\[
\lim_{k\to \infty}\Phi_k(w) = \phi^c(w)~.
\]
Further, since
\[
\lim_{c\rightarrow 0}\phi^c(w) = \phi(w)
\]
uniformly on compact sets, we conclude that
\[
\lim_{k \to \infty} \Phi_k(w) = \phi(w)
\]
uniformly on any compact set.  \qquad \end{proof}

\begin{proof}[\textbf{Proof of Theorem~\ref{t-main}}]
Consider the two convergences separately: First, the filled Julia set convergence.
\subsubsection*{Dynamical Space Convergence.}
Let $\phi_k, \,\phi, \,\rho_k$ be defined as in the proof of Lemma~\ref{l-1}.

Since the filled Julia set is a complete invariant of the map $F$, it is the case that
\[
[\rho_k^{-1}(K_F-2a_F)]_r = [\phi_k^{-1}(K_F)]_r
\]


By definition, we have that
\[
[\phi^{-1}(K_F)]_r = [\mathscr{K}]_r
\]
and since, by the Lemma~\ref{l-1}, $\phi_k \to \phi$ uniformly on compact sets, in particular on $\overline{\mathbb{D}(r)}$, then
\[
[\phi_k^{-1}(K_F)]_r \to [\mathscr{K}]_r
\]
as $k \to \infty$, as was to be shown.

\subsubsection*{Parameter Space Convergence.}

Let $q = 1/Q$, ($Q \neq 0$) and $G$ as in the proof of the Lemma~\ref{l-1}. Let
\begin{equation}\label{e-8a}
C_k = \rho_k^{-1}q(\mathscr{C}(\mathcal{S}_p) - F)
\end{equation}
be the shifted, scaled, and locally parametrized connectedness locus of $\mathcal{S}_p$ centered at the Misiurewicz map $F$.

Fix $\epsilon > 0$. Since 
\[
\overline{\mathbb{D}(r)} \smallsetminus N_\epsilon(\mathscr{K})
\]
is compact, there exists an $N = N(\epsilon)$ such that
\[
|F^N \circ \phi(w)| > M
\]
for any $w \in \overline{\mathbb{D}(r)} \smallsetminus N_\epsilon(\mathscr{K})$ and for any $M$. Since by item $(b)$ of Lemma~\ref{l-1}, $\Phi_k(w)$  converges to $\phi(w)$ uniformly on compact sets (namely $\overline{\mathbb{D}(r)} \smallsetminus N_\epsilon(\mathscr{K})$), we get
\[
\Big|G^{(\ell+km)+N}\big(2a(Q\rho_k w\big)\Big| > M
\]
for sufficiently large $k$, as the parameter on $G$ approaches the Misiurewicz parameter $\bm{t}=0$ as $k \to \infty$. Here, the function $a(\bm{t})$ is the projection onto the marked critical point for the map with local parameter $\bm{t}$, as in $a(\bm{t}) = a_{F_{\bm{t}}}$.


This means that the orbit of the co-critical point 
\[
2a(Q\rho_k w)
\]
of the map $G = F_{Q\rho_k w}$ is unbounded, and hence $G$ is not in the connectedness locus for the arbitrary $w \in \overline{\mathbb{D}(r)} \smallsetminus N_\epsilon(\mathscr{K})$. 

In the $\bm{t}$-plane with the given parametrization, and then shifted such that $F$ is at the origin, the local coordinate $w$ (shifted and scaled accordingly) corresponds to the map $G$, which is outside of the connectedness locus as discussed above.

This implies that
\[
w \not\in C_k
\]
as was to be shown, so
\[
[C_k]_r \subset N_\epsilon([\mathscr{K}]_r)~.
\]

Now, the inclusion
\[
[\mathscr{K}]_r \subset  N_\epsilon([C_k]_r)
\]
is to be shown.

Approximate $[\mathscr{K}]_r$ by a finite subset $E$ of $[\mathscr{K}]_r$ such that the $\epsilon/2$ neighborhood of $E$ covers $[\mathscr{K}]_r$. It is enough to prove that for any $w_0 \in E$, there is a sequence
\[
\{w_k\} \subset [C_k]_r
\]
such that
\[
|w_0 - w_k| < \epsilon/2
\]
for sufficiently large $k$.\begin{footnote}{
This is the case because given $w \in [\mathscr{K}]_r$, an $\epsilon/2$ neighborhood of $E$ covers $[\mathscr{K}]_r$ by compactness. Therefore, there must be a $p_w\in E$ such that
\[
|w - p_w | < \epsilon/2~.
\]
Now, if the sequence above exists for $p_w$, then any point in the sequence $w_k$ has the property that
\[
|w - w_k| \leq |w - p_w| + |p_w - w_k| = \epsilon/2 + \epsilon/2 = \epsilon
\]
and so $w \in N_\epsilon([C_k]_r)$.}

\end{footnote}

Let $\Delta$ be a disk of radius $\epsilon/2$ centered at $w_0$. We may assume that $\Delta \subset \mathbb{D}(r)$. If not, then $\Delta \cap \partial\mathbb{D}(r) \neq \emptyset$, we can take $w_k \in \partial\mathbb{D}(r)$ to be the sequence in question. By definition,
\[
[C_k]_r = \big(C_k \cap \mathbb{D}(r)\big) \cup \partial \mathbb{D}(r)
\]
the sequence $\{w_k\}$ being on the boundary of the disk of radius $r$ still gives a desired sequence in $[C_k]_r$, and the containment would hence be trivial.

Since $\phi(w_0) \in K_F$, as $E \subset \mathscr{K} = \phi^{-1}(K_F)$, consider the case where $\phi(w_0) \in J_F$. If not, then for large enough $k$, 
\[
[C_k]_r = \mathbb{D}(r)
\]
and the sequence is simple to construct.

If $\phi(w_0) \in J_F$, then since repelling cycles are dense in $\partial K_F = J_F$, we can choose $w_0'$ such that $\phi(w_0')$ is a repelling periodic point of some period $d$ and
\[
|w_0 - w_0'| < \epsilon/4~.
\]

So we have that the function
\[
\chi: w \mapsto F^d\big(\phi(w)\big) - \phi(w)
\]
has a zero at $w = w_0'$.

Now, consider the function
\[
\chi_k : w \mapsto G^d\big(\Phi_k(w)\big) - \Phi_k(w)
\]
which converges to $\chi$ since $\Phi_k \to \phi$ as $k \to \infty$.

Hurwitz' Theorem says that for a sequence of holomorphic functions that converge to a non-constant entire function, if the limit function has a zero of order $d$, then in a sufficiently small neighborhood of radius $\rho > 0$ of the root and for sufficiently large $k \in \N$, the functions $f_n$ with $n \geq k$ have precisely $d$ roots in the disk
\[
|w - w_0| < \rho
\]
which, in our case can be made to
\[
|w - w_0| < \epsilon/4
\]
or smaller.

Therefore, since $\chi$ has a root at $w_0'$ and each of the $\chi_k$ is a holomorphic function converging to a non-constant entire function $\chi$, by the uniform convergence of $\Phi_k(w)$ to $\phi(w)$,  there must be a root of $\chi_k$, denoted $w_k$, in the disk with
\[
|w_k - w_0'| < \epsilon/4
\]
for all large $k$.


Therefore, for a particular
\begin{equation}\label{e-9}
c_k = c(w_k) = Q\rho_kw_k
\end{equation}
we get,
\begin{equation}\label{e-10}
G^d\big(\Phi_k(w_k)\big) = \Phi_k(w_k)
\end{equation}
since $w_k$ is a solution to $\chi_k$ by construction. In addition, it follows from Equation~(\ref{e-10}) that
\[
F_{c_k}^{d+(\ell+km)}\big(2a(Q\rho_kw_k)\big) = F_{c_k}^{\ell+km}\big(2a(Q\rho_kw_k)\big)
\]
or, more succinctly using Equation~(\ref{e-9})
\[
F_{c_k}^{d+(\ell+km)}\big(2a(c_k)\big) = F_{c_k}^{\ell+km}\big(2a(c_k)\big)
\]
From this, it is clear that both critical points of $F_{c_k}$ have bounded orbit, and so  $c_k \in \mathscr{C}(\mathcal{S}_p)$. As such, it is the case that $w_k \in C_k$, with $|w_k - w_0| < \epsilon/2$ by the triangle inequality. Specifically, we have that
\[
|w_k - w_0| \leq |w_k - w'_0| + |w'_0 - w_0| = \epsilon/4 + \epsilon/4 =\epsilon/2
\]
\end{proof}




\setcounter{lem}{0}
\section{Transversality of Repelling Periodic Points}
This section functions as an appendix, where we establish the transversality needed in the Proof of Lemma~\ref{l-1}.

More precisely, we will establish the transversality of $b(c)$ and $s(c)$ which appear in the proof of Lemma~\ref{l-1}. In \cite{ON} and \cite{DPM},  it is shown that in the quadratic case, the map
\[
c \, \mapsto \, b(c) - s(c)
\]
has a simple zero, allowing the linearization of $b(c)-s(c)$, and so a way to sidestep calculating large iterates of small perturbations of $F_{\bm{t}}$ around $\bm{t} = 0$. This fact needs to be proven. 

Note that, the notation unravels to saying that
\[
G^{\ell}\big(2a(c)\big) -s(c) = F_{c}^\ell\big(2a(c)\big) - s(c)
\]
has a simple root at $c = 0$. It is clear that $c = 0$ is a root, as
\[
F_0^\ell\big(2a(0)\big) - s(0) = a_0 - a_0 = 0
\]
It needs to be shown that this zero is simple. The external ray method from \cite{ON} can be used, but it requires the notions of dynamical and parameter rays in $\mathcal{S}_p$, which are studied thoroughly in $\cite{CM3}$.

\subsection*{External Rays in $\mathcal{S}_p$}

The following definition of external dynamical rays is given in \cite{CM3}.

For the cubic polynomial $F$ of Equation~(\ref{e-0}), the dynamic Green's function \break $\bm{g}_F : \C \to \R$ is defined as,
\[
\bm{g}_F(z) = \lim_{n\to \infty} \dfrac{1}{3^n} \log |F^n(z)|~.
\]

The \textbf{external dynamic ray} $\mathscr{R}(K_F,\theta)$ of angle $\theta$ is an orthogonal trajectory to the family of equipotentials $\bm{g}_F(z) =C$. For $F $ in the space of cubic polynomials $P_3$ and $\theta \in \mathbb{T} = \R/\Z$, if $\mathscr{R}(K_F, \theta)$ does not bifurcate, we define
\[
\bm{r}_{F,\theta}: \R_+^* \to \mathscr{R}(K_F,\theta)
\]
by
\[
\bm{g}_F\big(\bm{r}_{F,\theta}(s)\big) = s~.
\]
If it bifurcates on a critical point $\omega$ of $\bm{g}_F$ (which are the critical and precritical points of $F$), the function $\bm{r}_{f,\theta}$ is only defined on $[\bm{g}_F(\omega), +\infty]$. If $\mathscr{R}(K_F, \theta)$ lands at a point $\alpha$, we extend $\bm{r}_{F,\theta}$ to $\R_+$ by setting $\bm{r}_{F,\theta}(0) = \alpha$.


We need an equivalent statement to Theorem 8.2 in \cite{ON} for the curve $\mathcal{S}_p$. In essence, we want
\begin{thm}\label{t-app1}
Let $F \in \mathscr{C}(\mathcal{S}_p)$ be a Misiurewicz map, i.e. the critical point $2a_F$ is repelling strictly preperiodic. Let $\mu_h \geq 1$ be the multiplicity of the bordering escape region $\mathcal{E}_h$.  Then:
\begin{enumerate}
\item 
The point $2a_F$ has a finite number of external rays in $K_F$.
\item
For each external argument $\theta$ of $2a_F$ in $K_F$, the external ray $\mathscr{R}\big(\mathscr{C}(\mathcal{S}_p), \,\theta/\mu_h\big)$ lands at $F$.
\end{enumerate}
\end{thm}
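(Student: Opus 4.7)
The plan is to adapt the classical argument of Douady--Hubbard, executed in the quadratic setting in \cite{ON}, to the curve $\mathcal{S}_p$, with the multiplicity $\mu_h$ entering through the degree of the B\"ottcher-type uniformization of the escape region $\mathcal{E}_h$ bordering $F$. The proof splits naturally into a dynamical step (item (1)), carried out in $K_F$, and a parameter step (item (2)), carried out along the curve $\mathcal{S}_p$; the two are linked by a transfer principle asserting that parameter rays in $\mathcal{E}_h$ correspond to dynamical rays followed by the free co-critical point.

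For item (1), I would first establish finiteness of external rays landing at the repelling periodic point $a_0 = F^\ell(2a_F)$. This is the classical ray-portrait finiteness of Douady--Hubbard at a repelling periodic orbit of a polynomial with connected Julia set: the linearizing coordinate at $a_0$ forces each landing ray to have rational angle lying in a finite orbit under multiplication by $3^m \pmod{1}$, and only finitely many such angles can actually land together at $a_0$. Pulling back by $F^\ell$, which has finite topological degree on any neighborhood, each ray landing at $a_0$ has only finitely many preimage rays, and so the collection of those landing at $2a_F$ is finite. This proves item (1).

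For item (2), I would use that $F$ lies on the boundary of $\mathcal{E}_h$ and that the uniformization $\Phi_h : \mathcal{E}_h \to \C \smallsetminus \overline{\mathbb{D}}$ described in \cite{CM3} is a $\mu_h$-fold covering, so that the parameter ray $\mathscr{R}(\mathscr{C}(\mathcal{S}_p),\theta/\mu_h)$ is a $\Phi_h$-lift of the radial ray of argument $\theta/\mu_h$. The factor $1/\mu_h$ records exactly the ramification that identifies $\mu_h$ dynamical angles with a single parameter angle. The transfer principle I would establish, by a holomorphic-motion / implicit function argument in the spirit of Lemma~\ref{l-1}, reads: for $\bm{t}$ moving along this parameter ray toward $F$, the free co-critical point $2a(\bm{t})$ traces out the dynamical external ray $\mathscr{R}(K_{F_{\bm{t}}},\theta)$. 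This is possible because the repelling cycle $s(\bm{t})$ and the rays landing on it persist holomorphically in $\bm{t}$ near $0$, exactly as already used in the proof of Lemma~\ref{l-1}.

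Granting the transfer step, landing at $F$ follows. Any accumulation point of the parameter ray as its potential parameter tends to $0^+$ must be a map $F'$ whose co-critical point lies on a dynamical ray of angle $\theta$ that lands at the holomorphic continuation of $a_0$; by the transversality established in Section~5 via Corollary~\ref{c-1}, namely the simplicity of the zero of $b(c)-s(c)$ at $c=0$, this accumulation set can only be the single map $F$. The main obstacle I foresee is making the transfer principle rigorous in the presence of the ramified uniformization of $\mathcal{E}_h$: one must check that the $\mu_h$ dynamical angles sitting over a given parameter angle are permuted consistently by the monodromy of $\Phi_h$, and identify the specific sheet of $\Phi_h$ whose parameter ray actually approaches $F$ rather than a different boundary point of $\mathcal{E}_h$.
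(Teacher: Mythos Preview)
Your treatment of item (1) matches the paper's: finiteness at the repelling periodic point $a_0=F^\ell(2a_F)$ followed by a finite-degree pullback along $F^\ell$.

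For item (2), however, there is a genuine circularity. You invoke Corollary~\ref{c-1} (the simplicity of the zero of $b(c)-s(c)$ at $c=0$) to pin down the accumulation set of the parameter ray. In the paper's logical structure, Corollary~\ref{c-1} is \emph{derived from} the proof of Theorem~\ref{t-app1}: the point of reproving Theorem~\ref{t-app1} here, rather than quoting it from \cite{CM3}, is precisely to extract the multiplicity $\nu$ and then show $\nu=1$. Concretely, the paper introduces the family $H_s(\lambda)=\bm{r}_{F_\lambda,\theta}(s)-2a(\lambda)$, lets $\nu$ be the order of vanishing of $H_0$ at $\lambda=0$, and uses Hurwitz to produce, for each small $s>0$, roots $R$ of $H_s$ near $0$; each such $R$ is then identified, via the B\"ottcher uniformization $\mathcal{A}(F_{\bm t})=\sqrt[\mu]{\mathfrak{B}_{F_{\bm t}}(2a(\bm t))}$, as the point on the parameter ray of angle $\theta/\mu$ at potential $s/\mu$. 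Letting $s\to 0$ gives landing at $F$, and the fact that $\mathcal{A}$ is a conformal isomorphism forces the Hurwitz count to be one, hence $\nu=1$, which is Corollary~\ref{c-1}. Your argument runs this dependency backwards.

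Even if one tried to salvage your landing argument without the simplicity of the zero (using only that zeros of $b(c)-s(c)$ are isolated, together with connectedness of the accumulation set), the approach would still not deliver what Section~5 is there for: it produces no handle on the multiplicity $\nu$, so the transversality needed in the proof of Lemma~\ref{l-1} would remain unproven. The paper's $H_s$/Hurwitz set-up is not incidental; it is the mechanism that simultaneously proves landing and computes $\nu=1$. Your transfer-principle formulation is morally the inverse of the paper's (you follow the parameter ray and read off the dynamical position of $2a(\bm t)$, whereas the paper solves $2a(\lambda)=\bm{r}_{F_\lambda,\theta}(s)$ and reads off the parameter position), but only the paper's direction makes the multiplicity visible.
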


This result is known to be true by \cite{CM3}, but we reprove it in a different way in order to get the transversality needed for the proof of Lemma~\ref{l-1}.

In \cite{ON}, it was shown that for all polynomials of degree $d$, in particular $d=3$, we have that the map
\[
(F,s) \mapsto \bm{r}_{F,\theta}(s)
\]
from $\Lambda \times \R_+ \to \C$ is continuous and holomorphic with respect to $F$ for all $\theta \in \R/\Z$. Here $\Lambda$ is a neighborhood of $F$ in $\mathcal{S}_p$, which we will intersect with the domain of definition of the local parameter $\bm{t}$. This means that this continuity is also in a neighborhood of $\bm{t}=0$.

Following \cite[Lemma 5.9]{CM1:2009}, each escape region can be expressed in terms of a  \textbf{Böttcher coordinate}. For $F \in \mathcal{E}_h$ an arbitrary escape region, there is an associated Böttcher map $\mathfrak{B}_F(z)$ defined for all $z \in \C$ with $|z|$ sufficiently large. Such map satisfies
\[
\mathfrak{B}_F\big(F(z)\big) = \mathfrak{B}_F(z)^3
\]
with $|\mathfrak{B}(z)| > 1$, and with $\mathfrak{B}(z)/z \to 1$ as $|z| \to \infty$. In particular, $\mathfrak{B}_F(2a)$ will be defined, so consider
\begin{align*}
\mathfrak{B}: \mathcal{E} &\to \C\setminus \overline{\mathbb{D}}\\
F &\mapsto \mathfrak{B}_{F}(2a)~.
\end{align*}
This notion parametrizes the escape region, and the value $\mathfrak{B}_{F}(2a)$ is the \textbf{Böttcher coordinate} for the map $F$.

Lastly, we define the notion of a parameter ray in an escape region, as given in \cite{CM3}.

\begin{definition}\label{d-3}
The conformal isomorphism $\mathcal{E}_h \to \C \smallsetminus \overline{\mathbb{D}}$ is given by choosing a smooth branch of $F \mapsto \sqrt[\mu]{\mathfrak{B}_F(2a_F)}$, the $\mu$th root of the B\"ottcher coordinate $\mathfrak{B}_F$ evaluated at $2a_F$. The \textbf{parameter angle} is then
\[
\phi(F) = \arg\big(\sqrt[\mu]{\mathfrak{B}_F(2a_F)}\,\big) \in \R/\Z~.
\]

For any angle $\phi_0$, the set of all $F \in \mathcal{E}_h$ with parameter angle $\phi(F) = \phi_0$ is called a \textbf{parameter ray $\mathscr{R}_{\mathcal{E}_h}(\phi_0)$}.
\end{definition}

The first part of the Theorem is clear from the definition of a Misiurewicz map. It needs to be shown that $2a_F$ has finitely many external rays. Recall now that $F^\ell(2a_F)$ is a repelling periodic point of $F$, and hence has finitely many external rays.

Hence, if $2a_F$ had infinitely many external arguments, then $\ell$ iterations of the angle tripling map takes an infinite set to a finite set. However, since the angle tripling map on $\R/\Z$ is a 3 to 1 map, then $\ell$ iterations of the angle tripling map is a $3^{\ell}$ to 1. Since $3^{\ell}$ is finite, the angle tripling map does not map the infinitely many preimage angles into a finite number of external arguments for $F^{\ell}(2a_F)$. Therefore, $2a_F$ cannot have infinitely many external arguments. 

\vspace{.25in}

The proof of the second part of the theorem follows from \cite{ON}, with notable changes, namely that one must specify that there is an escape region $\mathcal{E}_h$ that the parameter ray lives in.

\vspace{.2in}
\begin{proof}[\textbf{Proof of Theorem~\ref{t-app1} part 2.}]
The point $2a_F$ is a repelling preperiodic point of $F$ and we have that $\bm{r}_{F,\theta}(0) = 2a_F$ by the assumption that $\theta$ is an external argument of $2a$ in $K_F$. The point $2a_F$ clearly does not have any critical points in its forward orbit. For $\lambda$ a local parameter near 0 and $s \in \R^+$, set 
\[
H_s(\lambda) = \bm{r}_{F_{\lambda},\theta}(s) - 2a(\lambda)~.
\]
Denote by $\nu$ the order of the zero of $H_0$ at $\lambda = 0$. The order $\nu$ is clearly greater than 0 since $2a_F$ is the landing point when $\lambda = 0$, and $2a(\lambda) =2a(0) = 2a_F$. We have that $\nu < \infty$ since the point in question is an isolated preperiodic point. For $s > 0$ but close to $0$, the Hurwitz Theorem says that
\[
H_s(\lambda) = 0
\]
has $\nu$ solutions close to $\lambda = 0$, up to multiplicity. 


Choose a root $R$, and so
\[
2a(R) = \bm{r}_{F_R, \theta}(s)
\]
It follows that $2a(R) \not\in K_{F_R}$, and hence $F_R \not\in \mathscr{C}(\mathcal{S}_p)$, and in particular $F_R \in \mathcal{E}_h$ for some escape region of $\mathcal{S}_p$. 


The Böttcher coordinate for each $F$ maps into $\C \smallsetminus \overline{\mathbb{D}}$. As such, since the point $2a(R)$ is an element outside of $K_{F_R}$, it can be represented as a point outside of the closed disk in $\C$, specifically as the point
\[
\mathfrak{B}_{F_R}\big(2a(R)\big) = e^{s + 2\pi i \theta}
\]
with $s > 0$.

Now, consider the conformal isomorphism $\mathcal{A}:\mathcal{E}_h \to\C\smallsetminus \overline{\mathbb{D}}$, by choosing a smooth branch of
\[
\mathcal{A}(F_{\bm{t}}) = \sqrt[\mu]{\mathfrak{B}_{F_{\bm{t}}}\big(2a(\bm{t})\big)} ~.
\]
Now, we have the chain of maps
\[
\begin{tikzcd}
\C \smallsetminus K_{F_R} \arrow[r,"\mathfrak{B}_{F_R}"]& \C \smallsetminus \overline{\mathbb{D}} &\mathcal{E}_h\arrow[l,"\mathcal{A}",swap]~.\\
\end{tikzcd}
\]
Consider that
\begin{eqnarray*}
\mathcal{A}(F_{R}) &=& \sqrt[\mu]{\mathfrak{B}_{F_R}\big(2a(R)\big)} = \sqrt[\mu]{\exp(s + 2\pi i \theta)}\\
&=& \zeta_\mu \exp\left(\dfrac{s + 2\pi i \theta}{\mu}\right) = \exp\left(\dfrac{2\pi n i}{\mu}\right)\cdot \exp\left(\dfrac{s + 2\pi i \theta}{\mu}\right)
\end{eqnarray*}
with $\zeta_\mu$ an $n$-th root of unity.  Simplifying the previous equation we get,
\[
\mathcal{A}(F_{R}) = \exp\left(\dfrac{s}{\mu} + \dfrac{2\pi i(n+\theta)}{\mu}\right)
\]
Choosing the smooth branch amounts to the branch where $n = 0$, as this branch preserves continuity in $\mathcal{E}_h$ near $F$. Therefore, 
\[
\mathcal{A}(F_{R}) = \exp\left(\dfrac{s}{\mu} + 2\pi i\dfrac{\theta}{\mu}\right)~.
\]

As $s \to 0$, the parameter ray will land on the boundary of the escape region. This implies that the parameter ray with angle $\theta/\mu$ will land at $F$ as $s \to 0$, which is as $\lambda \to 0$, as was to be shown.
\end{proof}

The formulation of the corollary is closer to the one found in \cite{DPM}, but for the proof, compare with \cite{ON}.

\begin{cor}\label{c-1}
The equation
\[
b(c) - s(c) = 0
\]
has a simple zero at $\lambda = 0$.
\end{cor}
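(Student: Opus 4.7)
The plan is to follow the external-ray method of \cite{ON}, using the parameter-ray landing result just established (Theorem~\ref{t-app1}) to pin down the order of the zero. First, observe directly that $c=0$ is indeed a zero: by definition $b(0) - s(0) = F_0^\ell(2a_F) - a_0 = 0$. Let $\nu \geq 1$ denote the order of this zero; the goal is to show $\nu = 1$.

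The core idea is to compute the asymptotic behavior of $b(c) - s(c)$ as $c \to 0$ along a single parameter ray, and compare it to the formal Taylor expansion $b(c) - s(c) = B_0 c^\nu + o(c^\nu)$ coming from the assumed order $\nu$. Fix an external angle $\theta$ of $2a_F$ in $K_F$. By Theorem~\ref{t-app1}, the parameter ray $\mathscr{R}_{\mathcal{E}_h}(\theta/\mu)$ lands at $F$. Parametrize points on this ray by $s>0$ via $\mathcal{A}(F_{c(s)}) = \exp(s/\mu + 2\pi i\theta/\mu)$, so that $\mathfrak{B}_{F_{c(s)}}(2a(c(s))) = \exp(s + 2\pi i\theta)$ and hence $2a(c(s)) = \bm{r}_{F_{c(s)},\theta}(s)$. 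Applying $F_{c(s)}^\ell$ and using the semiconjugacy $F_{c(s)}^\ell \circ \bm{r}_{F_{c(s)},\theta}(u) = \bm{r}_{F_{c(s)},3^\ell\theta}(3^\ell u)$ yields
$$
b(c(s)) \;=\; \bm{r}_{F_{c(s)},\,3^\ell\theta}(3^\ell s),
$$
which converges to the persistent landing point $s(c(s))$ as $s \to 0^+$.

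Using the Koenigs linearization near the repelling periodic point $s(c)$, the dynamic ray of angle $3^\ell\theta$ approaches its landing point at a linear rate in $s$, giving
$$
b(c(s)) - s(c(s)) \;=\; \delta_\theta\, s + o(s), \qquad \delta_\theta \neq 0.
$$
Likewise, using the continuity and non-degeneracy of the boundary extension of the conformal isomorphism $\mathcal{A}^{-1}: \C\setminus\overline{\mathbb{D}} \to \mathcal{E}_h$ at the prime end $e^{2\pi i\theta/\mu}$ landing at $F$, we also get $c(s) = \gamma_\theta s + o(s)$ with $\gamma_\theta \neq 0$. Substituting into $b(c) - s(c) = B_0 c^\nu + o(c^\nu)$ produces $\delta_\theta s + o(s) = B_0 \gamma_\theta^\nu s^\nu + o(s^\nu)$, and matching leading powers of $s$ forces $\nu = 1$.

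The hard part will be justifying the linear-in-$s$ behavior $c(s) = \gamma_\theta s + o(s)$, i.e., that $\mathcal{A}^{-1}$ is non-degenerate (conformal, not merely continuous) at the boundary prime end corresponding to $F$. This amounts to showing that each parameter ray approaches $F$ along a definite direction in the local parameter $\bm{t}$, which in turn depends on local regularity of $\partial \mathcal{E}_h$ at the Misiurewicz parameter and should be extractable from the landing theory in \cite{CM3}. If this regularity proves delicate, one can bypass it via an argument-principle computation on a small circle $|c|=\epsilon$: the image under $c \mapsto b(c) - s(c)$ winds $\nu$ times around $0$, while tracking the $k$ parameter-ray crossings together with the directional information $\delta_\theta$ from the Koenigs step pins the winding number to exactly $1$, so $\nu = 1$.
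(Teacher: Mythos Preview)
Your approach has a genuine gap at precisely the step you flagged: the claim that $c(s) = \gamma_\theta\, s + o(s)$ with $\gamma_\theta \neq 0$. This is the assertion that the conformal isomorphism $\mathcal{A}^{-1}: \C\setminus\overline{\mathbb{D}} \to \mathcal{E}_h$ extends with a nonzero derivative to the boundary point $e^{2\pi i\theta/\mu}$. No such boundary regularity is available a priori; in the quadratic analogue it is a \emph{consequence} of transversality, not an input to it, so invoking it here risks circularity. Your fallback argument-principle sketch is too vague as stated: ``tracking the $k$ parameter-ray crossings together with the directional information $\delta_\theta$'' does not by itself pin the winding number to $1$ without further structure you have not supplied.

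The paper's argument, following \cite{ON}, sidesteps the boundary issue entirely and is much shorter. In the proof of Theorem~\ref{t-app1} one sets $H_s(\lambda) = \bm{r}_{F_\lambda,\theta}(s) - 2a(\lambda)$ and lets $\nu$ denote the order of the zero of $H_0$ at $\lambda=0$; this $\nu$ equals the order of the zero of $b(c)-s(c)$ because $F_c^\ell$ is a local biholomorphism at $2a(c)$ (recall $A_0=(F^\ell)'(2a_F)\neq 0$). By Hurwitz, for any small but \emph{strictly positive} $s$ the equation $H_s(\lambda)=0$ has exactly $\nu$ roots near $\lambda=0$, counted with multiplicity. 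Now $H_s(\lambda)=0$ says $2a(\lambda)=\bm{r}_{F_\lambda,\theta}(s)$, which the B\"ottcher diffeomorphism $(\lambda,z)\mapsto(\lambda,\mathfrak{B}_{F_\lambda}(z))$ converts into $\mathcal{A}(F_\lambda)=e^{(s+2\pi i\theta)/\mu}$. The right-hand side lies in the \emph{interior} $\C\setminus\overline{\mathbb{D}}$, where $\mathcal{A}$ is a genuine conformal isomorphism, so this equation has a unique simple solution. Hence $\nu=1$. The whole point is that one never sends $s\to 0$ or touches $\partial\mathcal{E}_h$: the root count is performed at a fixed interior potential, where conformality of $\mathcal{A}$ is given for free.
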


\begin{proof}
The multiplicity of $0$ as a root of the equation above is equal to $\nu$ in Part 2 of the Proof of Theorem~\ref{t-app1}.

For $s > 0$, the equation 
\[
H_s(\lambda) = 0
\]
has only one solution, since it is necessarily $\mathcal{A}^{-1}(e^{(s +2\pi i\theta)/\mu_h})$ by the previous part. 


Map the equation
\[
2 a(\lambda) = \bm{r}_{F_{\lambda}, \theta}(s)
\]
to the equation
\[
\mathfrak{B}_{F_\alpha}\big(2a(\lambda)\big) = e^{(s +2\pi i\theta)/\mu_h}
\]
via the diffeomorphism
\[
(\lambda, z) \mapsto (\lambda, \mathfrak{B}_{F_\alpha}(z))
\]
The multiplicity of the solution $2 a(\lambda) = \bm{r}_{F_\alpha, \theta}(s)$ is equal to the multiplicity of the solution to
\[
\mathcal{A}(F_\lambda) = e^{(s +2\pi i\theta)/\mu_h}
\]
which is one since $\mathcal{A}$ is a conformal isomorphism by construction, hence bijective.
\end{proof}

\bibliographystyle{alpha}
\bibliography{refs}

\end{document}